\documentclass[reqno,11pt]{amsart}

\usepackage[T1]{fontenc}
\usepackage[utf8]{inputenc} 
\usepackage{lmodern}

\usepackage{amsmath,amssymb,amsthm}
\usepackage{mathtools}
\usepackage{bbm}     
\usepackage{dsfont}  

\usepackage{graphicx}
\usepackage{booktabs}
\usepackage{multirow}
\usepackage{placeins}
\usepackage{lscape}
\usepackage{subcaption} 

\usepackage{xcolor}
\usepackage{ulem}
\usepackage[a4paper,margin=3.0cm]{geometry}

\usepackage[nesting]{hyperref}
\hypersetup{
  colorlinks=true,
  linkcolor=blue,
  citecolor=blue,
  urlcolor=blue
}

\numberwithin{equation}{section}
\numberwithin{figure}{section}

\theoremstyle{plain}
\newtheorem{thm}{Theorem}[section]

\newtheorem{lem}[thm]{Lemma}
\newtheorem{cor}[thm]{Corollary}

\theoremstyle{definition}
\newtheorem{defn}[thm]{Definition}
\newtheorem{ex}[thm]{Example}

\theoremstyle{remark}
\newtheorem{rmk}[thm]{Remark}

\newcommand{\R}{\mathbb{R}}
\newcommand{\C}{\mathbb{C}}

\newcommand{\N}{\mathbb{N}}

\makeatletter
\AtBeginDocument{%
 \def\@serieslogo{%
 \vbox to\headheight{%
 \parindent\z@ \fontsize{6}{7\p@}\selectfont
 \vss}}}

\def\makeoverbar#1#2#3#4#5#6#7{%
 \setbox0=\hbox{$\m@th#2\mkern#5mu{{}#3{}}\mkern#6mu$}%
 \setbox1=\null \dimen@=#4\fontdimen8#13 \dimen@=3.5\dimen@
 \advance\dimen@ by \ht0 \dimen@=-#7\dimen@ \advance\dimen@ by \wd0
 \ht1=\ht0 \dp1=\dp0 \wd1=\dimen@
 \dimen@=\fontdimen8#13 \fontdimen8#13=#4\fontdimen8#13
 \rlap{\hbox to \wd0{$\m@th\hss#2{\overline{\box1}}\mkern#5mu$}}
 \fontdimen8#13=\dimen@}

\def\mylabel#1#2{{\def\@currentlabel{#2}\label{#1}}}

\makeatother

\begin{document}

\title[The Characterization of $\mathcal{D}^{\alpha,2}$]{Characterization of the (fractional) Malliavin--Watanabe--Sobolev spaces $\mathcal{D}^{\alpha,2}$ via the Bargmann--Segal norm}

\author{Wolfgang Bock${}^1$}
\address{Linnaeus University, Department of Mathematics, Universitetsplatsen 1, 352 52 V\"axj\"o, Sweden}

\email{\href{mailto:wolfgang.bock@lnu.se}
{wolfgang.bock@lnu.se}}

\author{Martin Grothaus${}^2$}
\address{RPTU University Kaiserslautern--Landau, Department of Mathematics, Kaiserslautern Campus, Germany}

\email{\href{mailto:grothaus@mathematik.uni-kl.de} {grothaus@rptu.de}}

\begin{abstract}
Motivated by an open question going back to P.~Malliavin and P.-A.~Meyer (and closely related to the foundational work of S.~Watanabe) on whether Malliavin--Watanabe--Sobolev regularity admits a characterization in terms of a holomorphic La\-place image similar as for Hida distributions, we establish a characterization of the spaces $\mathcal{D}^{\alpha,2}$ for all $\alpha\in\mathbb{R}$ via the Bargmann--Segal norm of the $S$-transform. More precisely, we express $\mathcal{D}^{\alpha,2}$-regularity, $\alpha > 0$, of $F\in L^{2}(\mu)$, as well as dual regularity of distributions, in terms of integrability, differentiability and growth properties of the function
\[
(0,1) \ni \lambda \longmapsto \int_{\mathcal{S}'_{\mathbb{C}}} |SF(\lambda u)|^{2}\,d\nu(u)
\]
involving integer-order derivatives in $\lambda$ for $\alpha\in\mathbb{N}$ and Riemann-Liouville fractional derivatives/integrals for non-integer $\alpha$. Here $\nu$ is the Gaussian Bargmann--Segal measure. This yields practical criteria for both positive and negative (including fractional) orders of Malliavin regularity and thereby bridges Malliavin calculus and Bargmann--Segal techniques from white noise analysis. Applications are worked out for Donsker's delta, self-intersection local times of Gaussian processes, and Gauss kernels.
\end{abstract}
\maketitle

\vspace{0.5cm}
\begin{minipage}{14cm}
{\small
\begin{description}
\item[\rm \textsc{Keywords}] Malliavin differentiability, characterization theorem, regularity
\item[\rm \textsc{Mathematics Subject Classification}] 60H07, 60H40, 46E35, 46C15
\end{description}
}
\end{minipage}

\section{Introduction}
 In Gaussian analysis concepts for test-, generalized-function, and Sobolev spaces on infinite dimensional spaces are developed. Due to the absence of a Lebesgue measure, Gaussian measures are the natural reference measures. Very successful special cases are Malliavin calculus, see, e.g. \cite{Bell, Nualart} and white noise analysis, see, e.g. \cite{HKPS93, Ob94}. We also refer to \cite{Yan} for an overview of how both are connected. Their methods and tools have become established in fields such as stochastic analysis, e.g \cite{BP98, VenturiEtAl13}, finance, e.g. \cite{AOPU00, AO18} and mathematical physics, e.g.\cite{LetUsUse}. While Malliavin calculus is famous due to its Malliavin--Watanabe--Sobolev spaces used in the calculus of variations see e.g.\cite{Nualart, NR97}, white noise theory gives access to stochastic distributions and generalized solutions of SPDEs see e.g. \cite{KLPStW96, GNM22}. Both calculi have in common that the Wiener--It\^{o} chaos decomposition plays a central role. 

The main advantage of white noise theory is its profound analysis, which is based on various characterization theorems and their corollaries, see \cite{PS91, KLPStW96, GNM22}. These allow the users to reduce the study of complex infinite-dimensional objects and structures by suitable spaces of holomorphic functions after having applied a Laplace or Fourier transform. 
Paul Malliavin and Paul--Andre Meyer already saw the potential of this powerful toolbox and gave the pioneers of the first characterization theorems, Hida, Potthoff and Streit, the homework to find a similar characterization for the Malliavin-Watanabe-Sobolev spaces.

This homework initiated the characterization and study of several new Gel'fand triples refining the existing triple of Hida test functions and distributions. Among these triples there is the well-studied Potthoff-Timpel triple \cite{PT94}. It was characterized using the Bargmann-Segal space in \cite{GKS97} and revisited recently, by proving a much easier to apply characterization which involves the Bargmann-Segal norm of the Gauss--Laplace transform, see \cite{GNM22}.

However, these triples are having too small test-function spaces. Already Potthoff and Timpel showed, using Nelsons hypercontractivity theorem, that the Potthoff--Timpel test-functions are a subset of the infinitely often Malliavin differentiable functions with Malliavin derivative in all $L^p(\mu), 1\le p < \infty$. That is of course bad news for the characterization of the Malliavin--Watanabe--Sobolev spaces and here new ideas are needed.

In this article we give a complete characterization of the Malliavin--Watanabe--Sobolev spaces $\mathcal{D}^{\alpha,2}$ for all $\alpha \in \mathbb{R}$ using the Bargmann--Segal norm.

We demonstrate its simple and practical applicability of our characterization in several examples. Here, we would like to stress that it can be applied in two situations. First, for proving smoothness of square-integrable functions, second for deriving regularity of distributions. Since we can consider $\mathcal{D}^{\alpha,2}$ even for fractional $\alpha \in \mathbb{R}$, questions about optimal regularity can also be answered very precisely.

Note that with this we close a gap in the theory open for more than 25 years, bridging between Malliavin calculus and white noise analysis.

\section{Preliminaries from White Noise Analysis}
The starting point for white noise analysis is the Gel'fand triple
\begin{equation*}
	\mathcal{S} \subset L^{2}(\mathbb{R}) \subset \mathcal{S}^{\prime}. 
\end{equation*}

The inner product and norm of $L^{2}(\mathbb{R}):=L^{2}(\mathbb{R}, dx; \mathbb{R})$, the Hilbert space of real valued square integrable functions w.r.t.~the Lebesgue measure on $\mathbb{R}$, are denoted by $(\cdot,\cdot)$ and $|\cdot|$, respectively. The symbol $\mathcal{S}^{\prime}:=\mathcal{S}^{\prime}(\mathbb{R}; \mathbb{R})$ stands for the space of tempered distributions, the collection of all continuous linear functionals mapping the Schwartz space of real valued, smooth, rapidly decreasing functions $\mathcal{S}:=\mathcal{S}(\mathbb{R}; \mathbb{R})$ to $\mathbb{R}$. The dual pairing between {$x \in \mathcal{S}^{\prime}$} and $\xi \in \mathcal{S}$ is denoted by $\langle \xi, x \rangle$, which is an extension of the inner product in $L^{2}(\mathbb{R})$, i.e.,
\begin{equation*}
	\langle \xi, f \rangle = (\xi,f), \quad f \in L^{2}(\mathbb{R}), \, \xi \in \mathcal{S}.
\end{equation*}
The spaces $\mathcal{S}$ and $\mathcal{S}^{\prime}$ are endowed with their usual projective limit and inductive limit topology, respectively, corresponding to a countable system of semi-norms. The $\sigma$-algebra generated by the inductive limit topology on $\mathcal{S}^{\prime}$
coincides with the $\sigma$-algebra generated by the cylinder sets
$$\mathcal{C}_{\sigma} := \{x \in \mathcal{S}^{\prime}  :  (\langle \xi_1,x\rangle,\ldots,\langle \xi_n,x\rangle )\in B,\,\xi_1, \ldots \xi_n \in \mathcal{S}, B \in  \mathcal{B}(\mathbb{R}^{n}), n \in \N  \}, \nonumber$$
where $ \mathcal{B}(\mathbb{R}^{n})$, for $n \in \N$, is the Borel $\sigma$-algebra on $\mathbb{R}^{n}$. Applying the Bochner--Minlos theorem we introduce the standard Gaussian measure $\mu$ on $(\mathcal{S}^{\prime},\mathcal{C}_{\sigma})$ given by its characteristic function

\begin{equation*}
	\mathcal{S} \ni\xi \longmapsto \int_{\mathcal{S}^{\prime}}\exp{(i\langle \xi, x \rangle)} d\mu(x)=\exp \Big(-\frac{1}{2}|\xi|^{2} \Big).
\end{equation*}

The probability space $(\mathcal{S}^{\prime},\mathcal{C}_{\sigma},\mu)$ is then called the white noise space. The Hilbert space of complex valued square integrable functions with respect to the white noise measure is denoted by 
\begin{equation*}
	{L^{2}(\mu):=L^{2}(\mathcal{S}^{\prime}, \mathcal{C}_{\sigma},\mu;\mathbb{C})}
\end{equation*}
with inner product 
\begin{equation*}
	(\!(F,G)\!) := \int_{\mathcal{S}^{\prime}} F(x)\overline{G}(x)d\mu(x), \text{ for any } F, G \in L^{2}(\mu),
\end{equation*}
and induced norm denoted by $\| \cdot\|$. The subscript $\cdot_{\mathbb{C}}$ denotes complexification. I.e., if $V$ is a real vector space, then $V_{\mathbb{C}}$ denotes its complexification. If $V({\mathbb R}^n)$ is a vector space of (generalized) functions on ${\mathbb R}^n$, $n \in{\mathbb N}$, then $\widehat{V({\mathbb R}^n)}$ denotes its subspace of functions which are symmetric in the $n$ variables.

Each $F \in L^{2}(\mu)$ has the Wiener-It\^{o} expansion or chaos decomposition 
$$F = \sum_{n=0}^\infty \big\langle F^{(n)}, :\cdot^{\otimes n}: \big\rangle, \quad F^{(n)} \in \widehat{L^2({\mathbb R}^n, dx)}_{\mathbb C}.  $$
Its $L^{2}(\mu)$-norm then is given by
\begin{equation*}
\|F\|^2_{L^2(\mu)} = \sum_{n=0}^\infty n!\,|F^{(n)}|^2 < \infty.
\end{equation*}
Here $\mathcal{S}^{\prime} \ni x \mapsto :x^{\otimes n}\!: \, \in \widehat{\mathcal{S}^{\prime}({\mathbb R}^n)}$ denotes the Wick ordering, $\langle \cdot, \cdot \rangle$ the dual pairing between $\mathcal{S}^{\prime}({\mathbb R}^n)_{\mathbb C}$ and $\mathcal{S}^{\prime}({\mathbb R}^n)_{\mathbb C}$, extended in the first variable, in the sense of a square integrable function, to kernels $F^{(n)} \in \widehat{L^2({\mathbb R}^n,dx)}_{\mathbb C}$, see e.g.~\cite{Ob94}. The random variable $\big\langle F^{(n)}, :\cdot^{\otimes n}: \big\rangle$ coincides with the n-multiple Wiener integral of $F^{(n)} \in \widehat{L^2({\mathbb R}^n, dx)}_{\mathbb C}$, see e.g.~\cite{HKPS93}. For $F$ with finite chaos expansion we can define the number operator $N$ by 
$$L^2(\mu) \ni F = \sum_{n=0}^m \big\langle F^{(n)}, :\cdot^{\otimes n}: \big\rangle \mapsto NF := \sum_{n=0}^m n\big\langle F^{(n)}, :\cdot^{\otimes n}: \big\rangle \in L^2(\mu).$$

Following the definition of \cite{GNM22}, we define the test and regular generalized functions via their chaos decomposition
	\begin{multline*}
	{\mathcal G}_s := \Bigg{\{} \Phi = \sum_{n=0}^\infty \big\langle \Phi^{(n)} :\cdot^{\otimes n}: \big\rangle :  \Phi^{(n)} \in \widehat{L^2({\mathbb R}^n, dx)}_{\mathbb C}, \sum_{n=0}^\infty 2^{2sn} n! \,|\Phi^{(n)}|^2 < \infty \Bigg{\}}, \quad s \in {\mathbb R},
	\end{multline*}
where for $s>0$, we obtain a test function space and for $s<0$ a space of generalized functions with square integrable kernels. Equipped with
the norm $$\|\cdot\|_s := \|2^{sN} \cdot \|_{L^2(\mu)}$$ the space ${\mathcal G}_s, s \in {\mathbb R},$ is a Hilbert space. By taking a projective and inductive limit, respectively, we obtain the so-called Potthoff--Timpel triple: 
	\begin{eqnarray*}
	{\mathcal G} \subset L^2(\mu) \subset {\mathcal G}',
	\end{eqnarray*}
where 
	$$ {\mathcal G} := \bigcap_{q \in {\mathbb N}} {\mathcal G}_q \quad \text{and}\quad  {\mathcal G}^\prime := \bigcup_{q \in {\mathbb N}} {\mathcal G}_{-q}. $$
We have the following chain of spaces that can serve to define regularity of objects
	\begin{eqnarray*}
		({\mathcal S}) \subset {\mathcal G} \subset {\mathcal G}_r \subset {\mathcal G}_s \subset  L^2(\mu) \subset {\mathcal G}_{-s} \subset {\mathcal G}_{-r} \subset {\mathcal G}' \subset ({\mathcal S})', \quad r > s > 0.
	\end{eqnarray*}
Here $({\mathcal S})$ and $({\mathcal S})'$ denotes the space of Hida test functions and distributions, respectively. The characterization of the Potthof--Timpel triple has recently been improved in \cite{GNM22} via the Bargmann--Segal space combined with the concept of U-functionals. For this we introduce a Gaussian measure $\nu$ on ${\mathcal S}'_{\mathbb C}$ via 
	\begin{equation*}
		\int_{{\mathcal S}^\prime_{\mathbb C}} \exp\big(i\,\Re{\big\langle h, \overline{u} \big\rangle}\big) \,d\nu(u)
		= \exp\big(-\textstyle{\frac{1}{4}}\big\langle h, \overline{h} \big\rangle\big), \quad h \in {\mathcal S}_{{\mathbb C}}. 
	\end{equation*}
    I.e., $\nu$ is the product measure of the Gaussian measure on real and imaginary component of ${\mathcal S}^\prime_{\mathbb C}$ with covariance $(\cdot, \cdot)/2$, respectively. The Bargmann--Segal space is then given by
$$
		E^2(\nu) := \bigg\{H = \sum^\infty_{n=0} \langle H^{(n)}, \cdot^{\otimes n}\rangle : 
		H^{(n)}  \in \widehat{L^2({\mathbb R}^n, dx)}_{\mathbb C}, n \in {\mathbb N},
		\|H\|_{L^2(\nu)} < \infty \bigg\} \subset L^2(\nu).   
$$
As a key feature we have instead of the orthogonality of Wick ordered polynomials the orthogonality of monomials, i.e., 
	\begin{multline}\label{eqortho}
	    \left(\langle F^{(n)}, \cdot^{\otimes n} \rangle, \langle G^{(m)}, \cdot^{\otimes m}\rangle\right)_{L^2(\nu)} \\
		= \int_{{\mathcal S}^\prime_{\mathbb C}} \langle F^{(n)}, u^{\otimes n} \rangle
		\overline{\langle G^{(m)}, u^{\otimes m} \rangle} \,d\nu(u) = n!\,\langle F^{(n)}, \overline{G^{(n)}}\rangle \,\delta_{nm},
    \end{multline} 
where $F^{(n)} \in \widehat{L^2({\mathbb R}^n, dx)}_{\mathbb C}$, $G^{(m)} \in \widehat{L^2({\mathbb R}^m, dx)}_{\mathbb C}$,
	$n, m \in {\mathbb N}$, respectively.

The characterization of spaces from white noise analysis is done via the localization of the Laplace image in a suitable space of holomorphic functions. The role of the Laplace transform in Gaussian analysis is played by the $S$-transform:

\begin{defn}
Let $h \in {\mathcal S}_{\mathbb C}$. We denote the Wick exponential by $$:\exp(\langle h, \cdot \rangle): \, := \exp\left(\langle h, \cdot \rangle - \frac{1}{2}\langle h, h \rangle\right).$$
For an $F\in L^2(\mu)$ we define the $S$-transform by
$$SF(h) := \int_{{\mathcal S}'({\mathbb R})} :\exp(\langle h, \omega \rangle): \, F(\omega) \,d\mu(\omega),
				\quad h \in {\mathcal S}_{\mathbb C}.$$
Since for $h \in {\mathcal S}_{\mathbb C}$, the Wick exponential is a Hida test function,
we can define 
even for $\Phi \in (S)'$:
	\begin{align*}
				S\Phi(h) = \langle \! \langle :\exp(\langle h, \cdot \rangle):, \Phi \rangle \! \rangle,
				\quad h \in {\mathcal S}_{\mathbb C}.
			\end{align*}
\end{defn}
The $S$-transform of a Hida distributions can be identified with a U-functional:
\begin{defn}
\begin{enumerate}

    \item[(i)] A mapping $U: {\mathcal S}_{\mathbb C} \to {\mathbb C}$ is called a U-functional, iff
			\begin{align*}
				&(a) \,\,{\mathbb C} \ni z \mapsto U(f + zg) \in {\mathbb C} \,\text{ is entire
					for all } \, f, g \in {\mathcal S}; \\
				&(b) \,\mbox{there exist } 0 \le A, B < \infty \mbox{ and a continuous norm } \| \cdot \| \mbox{ on }
				{\mathcal S} \mbox{ such that} \\
				& \quad \quad |U(zf)| \le A\exp\big(B|z|^2\|f\|^2\big) \mbox{ for all } f \in {\mathcal S}, \, z \in {\mathbb C}.
			\end{align*}
            The set of U-functionals is denoted by $\mathcal{U}$.
    \item[(ii)] The set of finite dimensional orthogonal projection $\mathbb{P}$ is defined by
    \begin{multline*}
        \mathbb{P} := \bigg\{P:{\mathcal S}'_{\mathbb C} \to {\mathcal S}_{\mathbb C} : \\ P = \sum_{m=1}^n \langle e_m, \cdot \rangle e_m,
        e_m \in {\mathcal S}, 1 \le m \le n, \,\,\mathrm{orthonormal \,\, w.r.t.} (\cdot, \cdot), n \in \mathbb{N} \bigg\}.
    \end{multline*}
    
    \end{enumerate}
    \end{defn}

We obtain the following characterization of spaces:

\begin{thm}[\cite{PS91}, \cite{GNM22}]\label{thm:charaG}
Let $\Phi : (S) \to \mathbb{C}$ be linear. The characterization of spaces is as follows:\\
	\begin{tabular}{l l l}
		 $\Phi \in (S)'$ &$\Leftrightarrow$&  $S\Phi \in \mathcal{U}$; \\
	$\Phi \in \mathcal{G}$ &$\Leftrightarrow$& $S\Phi \in \mathcal{U}$ and $ 	\sup\limits_{P \in {\mathbb P}} \int_{{\mathcal S}_{{\mathbb C}}^\prime} \big|S\Phi(\lambda Pu)\big|^2 \,d\nu(u) < \infty
		\quad \mbox{for all } \lambda > 0$; \\
		$\Phi \in \mathcal{G}'$ &$\Leftrightarrow$& $S\Phi \in \mathcal{U}$ and $\sup\limits_{P \in {\mathbb P}} \int_{{\mathcal S}_{{\mathbb C}}^\prime} \big|S\Phi(\varepsilon Pu)\big|^2 \,d\nu(u) < \infty
		\quad \mbox{for some } \varepsilon > 0$; \\
		$\Phi \in L^2(\mu)$  &$\Leftrightarrow$& $S\Phi \in \mathcal{U}$ and $\sup\limits_{P \in {\mathbb P}} \int_{{\mathcal S}_{{\mathbb C}}^\prime} \big|S\Phi(Pu)\big|^2 \,d\nu(u) < \infty$.
	\end{tabular}
\end{thm}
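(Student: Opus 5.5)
The plan is to reduce all four equivalences to computations on chaos kernels, with the Hida case quoted from \cite{PS91}. The first line, $\Phi\in(S)'\Leftrightarrow S\Phi\in\mathcal{U}$, is the classical Potthoff--Streit characterization and will be taken as given. For the remaining three lines we may then assume $S\Phi\in\mathcal{U}$, so $\Phi\in(S)'$ has a chaos decomposition with kernels $\Phi^{(n)}\in\widehat{\mathcal{S}'(\mathbb{R}^n)}_{\mathbb{C}}$. Its $S$-transform is the power series
\[
S\Phi(h)=\sum_{n=0}^\infty \langle \Phi^{(n)},h^{\otimes n}\rangle ,\qquad h\in\mathcal{S}_{\mathbb C}.
\]

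The key identity comes from inserting $h=\lambda Pu$ with $P\in\mathbb{P}$ and $u\in\mathcal{S}'_{\mathbb C}$. Since $P$ has finite rank, $\langle\Phi^{(n)},(Pu)^{\otimes n}\rangle=\langle P^{\otimes n}\Phi^{(n)},u^{\otimes n}\rangle$ with $P^{\otimes n}\Phi^{(n)}\in\widehat{L^2(\mathbb{R}^n)}_{\mathbb C}$. The orthogonality of monomials \eqref{eqortho} then gives
\[
\int_{\mathcal{S}'_{\mathbb C}}|S\Phi(\lambda Pu)|^2\,d\nu(u)=\sum_{n=0}^\infty \lambda^{2n}\, n!\,|P^{\otimes n}\Phi^{(n)}|^2 .
\]
To justify exchanging the sum and the integral, I will note that on the finite-dimensional range of $P$ the series converges locally uniformly, because $S\Phi$ is entire of order two by the U-functional bound. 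Square-integrability against a Gaussian measure of the right variance is then checked termwise by Fubini/monotone convergence. This step is the main technical obstacle: for large $\lambda$ the growth bound $A\exp(B|z|^2\|f\|^2)$ alone does not guarantee integrability. I will therefore argue entirely with nonnegative series, which makes the identity valid in $[0,\infty]$ without any growth assumption.

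Next I take the supremum over $P\in\mathbb{P}$. Choose $P_k$ as projections onto the span of the first $k$ Hermite functions. Then $|P_k^{\otimes n}\Phi^{(n)}|\uparrow|\Phi^{(n)}|$ as $k\to\infty$, with the value $\infty$ allowed when the kernel is not in $L^2$, while $|P^{\otimes n}\Phi^{(n)}|\le|\Phi^{(n)}|$ for every $P$. For general orthonormal systems I will use that $P^{\otimes n}$ is a contraction, together with density of $\mathcal{S}$ in $L^2$. Monotone convergence in both $k$ and $n$ then yields
\[
\sup_{P\in\mathbb{P}}\int|S\Phi(\lambda Pu)|^2d\nu(u)=\sum_{n}\lambda^{2n}n!\,|\Phi^{(n)}|^2 .
\]
Here the right-hand side is finite only if every kernel lies in $L^2$.

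Finally I compare this series with the norms defining the spaces. Writing $\lambda=2^{s}$, the right-hand side equals $\|\Phi\|_s^2$. Taking $\lambda=1$ gives the $L^2(\mu)$ line. Finiteness for all $\lambda>0$ is equivalent to $\Phi\in\mathcal{G}_q$ for all $q$, i.e.\ $\Phi\in\mathcal{G}$. Finiteness for some $\varepsilon>0$ is equivalent to $\Phi\in\mathcal{G}_{-q}$ for some $q$, i.e.\ $\Phi\in\mathcal{G}'$. The converse directions follow from the same identity, since membership in these spaces means exactly that the series is finite.
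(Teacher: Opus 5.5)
The paper does not prove this theorem (it is quoted from \cite{PS91}, \cite{GNM22}), but your argument is correct and is exactly the computation \eqref{eqwithoutP} from the proof of Lemma~\ref{le1}, run for an arbitrary $\Phi\in(S)'$ with kernels in $\widehat{\mathcal{S}'(\mathbb{R}^n)}_{\mathbb{C}}$ rather than $L^2$ kernels, which is precisely what the reverse implications need since there one only knows $S\Phi\in\mathcal{U}$. The one step to make explicit is the interchange in $[0,\infty]$, because the cross terms are not nonnegative: since $\nu$ is invariant under $u\mapsto e^{i\theta}u$ and $S\Phi(\lambda e^{i\theta}Pu)=\sum_{n}\lambda^{n}e^{in\theta}\langle P^{\otimes n}\Phi^{(n)},u^{\otimes n}\rangle$, Parseval in $\theta$ followed by Tonelli in $u$ gives $\int_{\mathcal{S}'_{\mathbb{C}}}|S\Phi(\lambda Pu)|^{2}\,d\nu(u)=\sum_{n}\lambda^{2n}\int_{\mathcal{S}'_{\mathbb{C}}}|\langle P^{\otimes n}\Phi^{(n)},u^{\otimes n}\rangle|^{2}\,d\nu(u)$ with no growth or integrability assumption.
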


In case we already know that $\Phi \in \mathcal{G}'$ we can omit taking the supremum in $P \in {\mathbb P}$ in the integrals above, since then the kernels of its chaos decomposition are already in $\widehat{L^2({\mathbb R}^n; dx)}_{\mathbb C}, n \in \mathbb{N}$.

\begin{lem}\label{le1}
	Let $\Phi \in \mathcal{G}'$, then there exist $\varepsilon > 0$ such that $S\Phi(\varepsilon \cdot) \in E^2(\nu)$ and 
    \begin{enumerate}
    \item[(i)]		
        \begin{equation*}
			\sup_{P \in {\mathbb P}} \int_{{\mathcal S}_{{\mathbb C}}^\prime} \big|S\Phi(\lambda Pu)\big|^2 \,d\nu(u)
            = \sum_{n=0}^{\infty} n! \lambda^{2n} |\Phi^{(n)}|^2
            = \int_{{\mathcal S}_{{\mathbb C}}^\prime} \big|S\Phi(\lambda u)\big|^2 \,d\nu(u)			\quad \mbox{for all } \lambda > 0. 
		\end{equation*}
In particular, the above expressions are finite for all $0<\lambda\le \varepsilon$. In general they might be infinite.
    \item[(ii)]
    \begin{equation*}
		\|\Phi\|^2_{s} = \int_{{\mathcal S}_{{\mathbb C}}^\prime}  \big|S\Phi(2^{s} u)\big|^2 \,d\nu(u)	\quad \mbox{for all } s \in {\mathbb R},
		\end{equation*}
       again with the convention that both sides may be infinite.
\end{enumerate}
\end{lem}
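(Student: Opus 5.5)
The plan is to compute everything explicitly from the chaos decomposition of $\Phi$, and to use the orthogonality of monomials \eqref{eqortho} in $L^2(\nu)$ as the main tool.

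\textbf{Step 1: $S$-transform on chaos terms.} Since $\Phi \in \mathcal{G}'$, there is $q \in \mathbb{N}$ with $\Phi \in \mathcal{G}_{-q}$. Hence $\Phi = \sum_n \langle \Phi^{(n)}, :\cdot^{\otimes n}:\rangle$ with square integrable kernels and $\sum_n 2^{-2qn} n!\,|\Phi^{(n)}|^2<\infty$. Pairing with the Wick exponential $:\exp(\langle h,\cdot\rangle): = \sum_n \frac{1}{n!}\langle h^{\otimes n}, :\cdot^{\otimes n}:\rangle$ gives, for $h\in\mathcal{S}_{\mathbb C}$,
\[
S\Phi(h)=\sum_{n=0}^\infty \langle \Phi^{(n)}, h^{\otimes n}\rangle ,
\]
which converges absolutely because $|\langle \Phi^{(n)},h^{\otimes n}\rangle|\le |\Phi^{(n)}|\,|h|^n$. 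Consequently $S\Phi(\lambda u)=\sum_n \lambda^n\langle \Phi^{(n)},u^{\otimes n}\rangle$. Because each $\Phi^{(n)}\in L^2$, the term $\langle \Phi^{(n)},u^{\otimes n}\rangle$ makes sense as an $L^2(\nu)$-element for $u\in\mathcal{S}'_{\mathbb C}$. By \eqref{eqortho} these summands are mutually orthogonal in $L^2(\nu)$, with norm squared $n!\lambda^{2n}|\Phi^{(n)}|^2$.

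\textbf{Step 2: the identity for $\int |S\Phi(\lambda u)|^2\,d\nu$.} If $\sum_n n!\lambda^{2n}|\Phi^{(n)}|^2<\infty$, the series converges in $L^2(\nu)$ and Pythagoras gives the second equality in (i). Taking $\varepsilon = 2^{-q}$ makes this sum finite for all $\lambda\le\varepsilon$, so $S\Phi(\varepsilon\cdot)\in E^2(\nu)$. If the sum is infinite, the convention is that the integral is infinite. To justify this, interpret $S\Phi(\lambda\cdot)$ on $\mathcal{S}'_{\mathbb C}$ as the $L^2(\nu)$-limit of partial sums. I expect this to be the main (though mild) obstacle, namely making precise what $S\Phi(\lambda u)$ means $\nu$-a.e. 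I would handle it by defining it as that limit when it exists and declaring the integral infinite otherwise, consistent with the statement's convention. Alternatively, one can use the projection version, which is unambiguous.

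\textbf{Step 3: the supremum over $\mathbb{P}$.} For $P=\sum_{m=1}^k\langle e_m,\cdot\rangle e_m$ we have $Pu\in\mathcal{S}_{\mathbb C}$, so
\[
S\Phi(\lambda Pu)=\sum_n \lambda^n\langle (P^{\otimes n}\Phi^{(n)}), u^{\otimes n}\rangle ,
\]
where $P^{\otimes n}$ acts as the orthogonal projection on $L^2(\mathbb{R}^n)$. Since $P^{\otimes n}\Phi^{(n)}$ lives in a finite-dimensional space, all terms are genuine polynomials in finitely many Gaussian variables. Orthogonality then gives
\[
\int|S\Phi(\lambda Pu)|^2\,d\nu=\sum_n n!\lambda^{2n}|P^{\otimes n}\Phi^{(n)}|^2\le\sum_n n!\lambda^{2n}|\Phi^{(n)}|^2 .
\]
(Here the sum converges in $L^2(\nu)$ because for fixed $P$ the finite-dimensional Gaussian integrals are controlled by $|P^{\otimes n}\Phi^{(n)}|\le |\Phi^{(n)}|$. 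If the right-hand side is infinite, one first works with truncations.) Taking an increasing sequence $P_k$ built from an orthonormal basis of $L^2(\mathbb{R})$ contained in $\mathcal{S}$, we get $P_k^{\otimes n}\Phi^{(n)}\to\Phi^{(n)}$ in $L^2$. Monotone convergence in $n$ (all terms are nonnegative and increasing in $k$) then shows that the supremum equals $\sum_n n!\lambda^{2n}|\Phi^{(n)}|^2$, including the infinite case. This completes (i).

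\textbf{Step 4: part (ii).} Put $\lambda=2^s$ in (i). Since $\|\Phi\|_s^2=\|2^{sN}\Phi\|^2_{L^2(\mu)}=\sum_n 2^{2sn}n!|\Phi^{(n)}|^2$, this is exactly $\sum_n n!\lambda^{2n}|\Phi^{(n)}|^2$, and both sides are simultaneously finite or infinite.
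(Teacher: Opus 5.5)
Your proposal is correct and follows essentially the same route as the paper. Both arguments use the chaos expansion $S\Phi(h)=\sum_n\langle\Phi^{(n)},h^{\otimes n}\rangle$, orthogonality of monomials \eqref{eqortho} in $L^2(\nu)$, an exhausting sequence of projections built from an orthonormal basis to identify the supremum over $\mathbb{P}$, and the substitution $\lambda=2^s$ for (ii). The differences are only in explicitness: you take $\varepsilon=2^{-q}$ directly from $\Phi\in\mathcal{G}_{-q}$ instead of invoking Theorem~\ref{thm:charaG}, and you spell out the monotone-convergence step (and flag the infinite case for fixed $P$) that the paper passes over.
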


\begin{proof}
Let $\Phi\in\mathcal G'$. Then there exists $q\in\mathbb N$ such that $\Phi\in\mathcal G_{-q}$ and hence $\Phi$ admits a chaos expansion
\[
\Phi=\sum_{n=0}^\infty \langle \Phi^{(n)},:\cdot^{\otimes n}:\rangle,
\qquad \Phi^{(n)}\in \widehat{L^2(\mathbb R^n,dx)}_{\mathbb C},
\]
with
\[
\|\Phi\|_{-q}^2=\sum_{n=0}^\infty 2^{-2qn}\,n!\,|\Phi^{(n)}|^2<\infty.
\]
The S-transform of $\Phi$ is given by
\[
S\Phi(f)=\sum_{n=0}^\infty \langle \Phi^{(n)},f^{\otimes n} \rangle,
\qquad f \in \mathcal{S}_{\mathbb C},
\]
see \cite{GNM22}.

(i):
For any $\lambda > 0$ we have 
\begin{multline}\label{eqwithoutP}
		\sup_{P \in {\mathbb P}} \int_{{\mathcal S}_{{\mathbb C}}^\prime} \big|S\Phi(\lambda Pu)\big|^2 \,d\nu(u) = 	\sup_{P \in {\mathbb P}} \int_{{\mathcal S}_{{\mathbb C}}^\prime} S\Phi(\lambda Pu) \overline{S\Phi(\lambda Pu) }  \,d\nu(u)\\ 
		=\sup_{P \in {\mathbb P}} \int_{{\mathcal S}_{{\mathbb C}}^\prime} \sum_{n=0}^{\infty} \langle \Phi^{(n)}, \lambda^n (Pu)^{\otimes n}) \rangle \overline{\sum_{n=0}^{\infty} \langle \Phi^{(n)}, \lambda^n (Pu)^{\otimes n}) \rangle}  \,d\nu(u)\\ 
		=  \sup_{P \in {\mathbb P}} \sum_{n=0}^{\infty} n! \lambda^{2n} |P^{\otimes n}\Phi^{(n)}|^2
        = \sum_{n=0}^{\infty} n! \lambda^{2n} |\Phi^{(n)}|^2 \\
        =\int_{{\mathcal S}_{{\mathbb C}}^\prime} \sum_{n=0}^{\infty} \langle \Phi^{(n)}, \lambda^n u^{\otimes n} \rangle \overline{\sum_{n=0}^{\infty} \langle \Phi^{(n)}, \lambda^n u^{\otimes n}) \rangle}  \,d\nu(u)\\
        = \int_{{\mathcal S}_{{\mathbb C}}^\prime} \big|S\Phi(\lambda u)\big|^2 \,d\nu(u) = \|S\Phi(\lambda \cdot)\|^2_{L^2(\nu)},
\end{multline}
        because $\Phi^{(n)} \in \widehat{L^2({\mathbb R}^n; dx)}_{\mathbb C}, n \in \mathbb{N}$. The third equality follows by \eqref{eqortho} and the upper estimate in the fourth equality follows by taking a sequence of orthogonal projections corresponding to an orthonormal basis of $L^2(dx)$. Note that $S\Phi(\lambda \cdot): {\mathcal S}_{{\mathbb C}}^\prime \to \mathbb{C}$ is a well-defined function due to the fact $\Phi^{(n)} \in \widehat{L^2({\mathbb R}^n; dx)}_{\mathbb C}, n \in \mathbb{N}$, compare with \eqref{eqortho}, in the case the terms in \eqref{eqwithoutP}
        are finite. Now, due to Theorem \ref{thm:charaG} there exist
        $\varepsilon > 0$ such that $S\Phi(\varepsilon \cdot) \in E^2(\nu)$.
        
(ii): Follows directely from the definition of the norm in $\mathcal{G}_s$ and \eqref{eqwithoutP}.
\end{proof}


Let $(\mathcal S' ,\mathcal C_\sigma,\mu)$ be the white noise probability space.
For $F\in L^2(\mu)$ we write its Wiener--It\^{o} chaos expansion as
\[
F=\sum_{n=0}^\infty \big\langle F^{(n)},:\cdot^{\otimes n}:\big\rangle,
\qquad
F^{(n)}\in \widehat{L^2(\R^n,dx)}_{\C},
\]
so that
\[
\|F\|_{L^2(\mu)}^2=\sum_{n=0}^\infty n!\,|F^{(n)}|^2.
\]
The (closure of the) number operator $N$ on $L^2(\mu)$ is defined by
\[
NF:=\sum_{n=0}^\infty n \big\langle F^{(n)},:\cdot^{\otimes n}:\big\rangle,
\]
with domain
\[
\mathrm{Dom}(N):=\left\{F\in L^2(\mu):\sum_{n=0}^\infty n^2\,n!\,|F^{(n)}|^2<\infty\right\}.
\]
More generally, for $\alpha\in\R$, $\Phi \in \mathcal{G}'$, we define the fractional powers $(I+N^{\alpha/2})$ by
\[
(I+N^{\alpha/2})\Phi
:=\sum_{n=0}^\infty (1+n^{\alpha/2})\big\langle \Phi^{(n)},:\cdot^{\otimes n}:\big\rangle,
\]
whenever the right-hand side converges in $L^2(\mu)$.

\begin{defn}[Fractional Malliavin--Watanabe--Sobolev spaces]
Let $\alpha\in\R$. The space $\mathcal D^{\alpha,2}$ is defined by
\[
\mathcal D^{\alpha,2}
:=\left\{\Phi\in \mathcal{G}': (I+N^{\alpha/2})\Phi\in L^2(\mu)\right\},
\]
equipped with the Hilbert norm
\[
\|\Phi\|_{\alpha,2}
:=\big\|(I+N^{\alpha/2})\Phi\big\|_{L^2(\mu)}.
\]
Equivalently, in terms of chaos kernels,
\begin{equation}\label{eq:Dalpha2_chaos_norm}
\|\Phi\|_{\alpha,2}^2
=\sum_{n=0}^\infty (1+n^{\alpha})\,n!\,|\Phi^{(n)}|^2,
\end{equation}
and
\[
\Phi\in\mathcal D^{\alpha,2}
\quad\Longleftrightarrow\quad
\sum_{n=0}^\infty (1+n^{\alpha})\,n!\,|\Phi^{(n)}|^2<\infty.
\]
\end{defn}

\begin{rmk}
For $\alpha=m\in\N$, the norm \eqref{eq:Dalpha2_chaos_norm} is equivalent to the
classical Malliavin--Sobolev norm $\|F\|_{m,2}$ defined via Malliavin derivatives,
i.e.\ $F\in\mathcal D^{m,2}$ if and only if $F$ is $m$ times Malliavin differentiable
and $D^kF\in L^2(\mu;\,L^2(\R)^{\otimes k})$ for all $0\le k\le m$.
In the present work we use \eqref{eq:Dalpha2_chaos_norm} as the definition for all
$\alpha\in\R$.
\end{rmk}

The characterization of Malliavin--Watanabe regularity through the Potthoff--Timpel scale
$\{\mathcal G_s\}_{s\in\mathbb R}$ is somewhat unsatisfactory from the viewpoint of the
Malliavin--Watanabe--Sobolev scale $\{\mathcal D^{\alpha,2}\}_{\alpha\in\mathbb R}$.
The reason is that the $\mathcal G_s$-norms involve \emph{exponential} weights in the chaos order,
whereas $\mathcal D^{\alpha,2}$ is governed by \emph{polynomial} weights.
Consequently, any positive $\mathcal G_s$-regularity already forces arbitrarily high Malliavin
differentiability (and even $L^p$-bounds for Malliavin derivatives), see the next corollary
whose argument goes back to \cite{PT94}.

\begin{cor}
	Let $F \in L^2(\mu)$.
\begin{enumerate}
    \item[(i)] For all $m \in {\mathbb N}, p \ge 1$, there exists $\lambda \ge 1$ such that
	\begin{equation*}
		\int_{{\mathcal S}_{{\mathbb C}}^\prime} \big|SF(\lambda u)\big|^2 \,d\nu(u) < \infty
	\end{equation*}
    implies $F \in {\mathcal D}^{m,p}$. I.e., $F$ is $m$-times Malliavin differentiable
	and the Malliavin derivatives up to order $m$ are contained in $L^p(\mu)$.    
    \item[(ii)] In particular, if
    \begin{equation*}
			\int_{{\mathcal S}_{{\mathbb C}}^\prime} \big|SF(\lambda u)\big|^2 \,d\nu(u) < \infty
			\quad \mbox{for some } \lambda > 1,
		\end{equation*}
	then $F$ is infinitely often Malliavin differentiable
	and the Malliavin derivatives of arbitrary order are contained in $L^p(\mu)$, $1 \le p \le 2$.
\end{enumerate}
    \end{cor}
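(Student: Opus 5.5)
The plan is to translate the hypothesis into a statement about the Potthoff--Timpel scale and then use hypercontractivity of the Ornstein--Uhlenbeck semigroup to control Malliavin derivatives in $L^p(\mu)$. Since $F\in L^2(\mu)\subset\mathcal G'$, Lemma~\ref{le1}(i) applies. It shows that
\[
\int_{{\mathcal S}_{{\mathbb C}}^\prime} |SF(\lambda u)|^2\,d\nu(u)=\sum_{n\ge0} n!\,\lambda^{2n}|F^{(n)}|^2 .
\]
With $\lambda=2^s$, Lemma~\ref{le1}(ii) turns this into $\|F\|_s^2=\|2^{sN}F\|^2_{L^2(\mu)}$. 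The hypothesis is therefore equivalent to $F\in\mathcal G_s$ with $s=\log_2\lambda$, and everything reduces to showing $\mathcal G_s\subset\mathcal D^{m,p}$ for $s$ large enough relative to $m$ and $p$.

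For (i), fix $m$ and $p$; we may take $p\ge2$, since smaller $p$ follows from the case $p=2$. The $k$-th Malliavin derivative acts on chaos kernels by
\[
D^k\langle F^{(n)},:\cdot^{\otimes n}:\rangle=\tfrac{n!}{(n-k)!}\langle F^{(n)},:\cdot^{\otimes(n-k)}:\rangle ,
\]
which is an $L^2(\mathbb R^k)$-valued chaos of order $n-k$. Write $D^kF=T_t G_k$, where $T_t=e^{-tN}$ is the Ornstein--Uhlenbeck semigroup acting on the vector-valued chaos and $G_k:=e^{tN}D^kF$. Nelson's hypercontractivity, which also holds for Hilbert-space-valued functionals, gives
\[
\|T_tG\|_{L^p}\le\|G\|_{L^2}\quad\text{whenever } e^{2t}\ge p-1 .
\]
Hence
\[
\|D^kF\|_{L^p}^2\le\|G_k\|_{L^2}^2=\sum_{n\ge k}\Big(\tfrac{n!}{(n-k)!}\Big)^2(n-k)!\,e^{2t(n-k)}|F^{(n)}|^2 .
\]
Since $\frac{n!}{(n-k)!}\le n^k$, this is bounded by
\[
\sum_n n!\,n^k e^{2tn}|F^{(n)}|^2\le C_{k,\delta}\sum_n n!\,(e^{t}2^{\delta})^{2n}|F^{(n)}|^2
\]
for any $\delta>0$, because polynomial factors are absorbed by exponential ones. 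Choosing $t=\tfrac12\log(p-1)$ and $\lambda:=\max\{1,\sqrt{p-1}\,2^{\delta}\}$, the right-hand side is a constant times the hypothesised integral, so it is finite for all $k\le m$. In fact one such $\lambda$ works for every $m$ at once, depending only on $p$.

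To make "$F\in\mathcal D^{m,p}$" rigorous, one approximates $F$ by its chaos truncations $F_M$. Since $p\ge2$, the $F_M$ converge to $F$ in $L^p$ by the same hypercontractive estimate with $k=0$, and the $D^kF_M$ form Cauchy sequences in $L^p(\mu;L^2(\mathbb R)^{\otimes k})$ by the bound above. Closability of $D$ then gives the claim.

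For (ii), $\lambda>1$ means $F\in\mathcal G_s$ for some $s>0$. Then
\[
\sum_n n^k\,n!\,|F^{(n)}|^2\le C_k\sum_n n!\,2^{2sn}|F^{(n)}|^2<\infty
\]
for every $k$, which is $F\in\mathcal D^{k,2}$ for all $k$, i.e.\ infinite Malliavin differentiability. The restriction $1\le p\le2$ then follows from $L^2(\mu)\subset L^p(\mu)$ on a probability space, with no hypercontractivity needed.

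The main obstacle I expect is the hypercontractivity step for vector-valued chaos: one has to make sure Nelson's bound holds with constant one on $L^2(\mathbb R^k)$-valued functionals, for example via the second-quantisation or tensor-product argument. One also has to track the shift from chaos order $n$ to $n-k$ so that the required $\lambda$ is uniform. The rest is bookkeeping with Lemma~\ref{le1}.
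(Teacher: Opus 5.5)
Your argument is correct, but it reaches the $L^p$ control by a genuinely different route from the paper.

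The paper works only with scalar functionals. It writes $\|F\|_{m,p}=\|(N+1)^{m/2}F\|_{L^p(\mu)}$; this is Meyer's inequality, really an equivalence of norms, and it is used without comment. It then writes $(N+1)^{m/2}F=e^{-tN}(N+1)^{m/2}e^{tN}F$, applies scalar hypercontractivity to a factor $e^{-tN/2}$, bounds $\sup_n e^{-tn/2}(n+1)^{m/2}$, and ends with $\lambda=e^t$, $e^t\ge p-1$.

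You never pass through $(N+1)^{m/2}$. You expand $D^kF$ as an $L^2(\mathbb R^k)$-valued chaos of order $n-k$, apply hypercontractivity to Hilbert-space-valued functionals, and absorb $n!/(n-k)!\le n^k$ into $2^{2\delta n}$. This buys you several things:
\begin{itemize}
\item you do not need Meyer's inequalities;
\item your conclusion is exactly the derivative-based statement of the corollary;
\item you make explicit the density and closability step that the paper only alludes to;
\item you get a slightly sharper threshold, $\lambda>\sqrt{p-1}$ rather than $\lambda\ge p-1$.
\end{itemize}

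The price is vector-valued hypercontractivity, which you flag as the main obstacle. It is in fact immediate. By Mehler's formula, $T_t$ is integration against a probability kernel, so $|T_tG|_H\le T_t|G|_H$ pointwise. Scalar Nelson applied to $|G|_H$ then gives $\|T_tG\|_{L^p(\mu;H)}\le\|G\|_{L^2(\mu;H)}$ with constant one. You do not need second quantisation or a tensor-product argument.

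Your part (ii) is essentially the paper's argument.
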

    
\begin{proof} (i):
First recall that the functions in $L^2(\mu)$ with finite order chaos decomposition are dense in all spaces of Malliavin differentiable functions and that one can represent their norms via the number operator $N$. Hence for large enough $t \ge 0$ and a positive finite constant $C_{t,m}$ it holds:
	\begin{multline}\label{eqhypo}
	\| F \|_{m,p} = \|(N+1)^\frac{m}{2}F \|_{L^p(\mu)}
    = \| e^{-tN}(N+1)^\frac{m}{2} e^{tN}F \|_{L^p(\mu)}    \\
    \le \| e^{-tN/2}(N+1)^\frac{m}{2}  e^{tN} F \|_{L^2(\mu)}
	\le C_{t,m} \| e^{tN} F \|_{L^2(\mu)} = C_{t,m} \|F \|_t 
	\end{multline}
    for all functions $F \in L^2(\mu)$ with finite order chaos decomposition. Here we used hypercontractivity of the semigroup generated by $-N$. Now choose $\lambda = e^t$ and combine with Lemma \ref{le1}(ii).
    
    (ii): Note that in the case $1 \le p \le 2$ we do not need hypercontractivity and the estimate as in \eqref{eqhypo} can be obtained for all $t > 0$.
\end{proof}

For any function $F \in L^2(\mu)$, $0 < \lambda < 1$, we have by Lemma \ref{le1}(i):
\begin{eqnarray*}
		\int_{{\mathcal S}_{{\mathbb C}}^\prime} \big|SF(\lambda u)\big|^2 \,d\nu(u) = \sum_{n=0}^{\infty} n! \lambda^{2n} |F^{(n)}|^2.
\end{eqnarray*}
Taking the derivative with respect to $\lambda$ yields:
$$
\partial_{\lambda} \int_{{\mathcal S}_{{\mathbb C}}^\prime} \big|SF(\lambda u)\big|^2 \,d\nu(u) =  \sum_{n=1}^{\infty} n! 2n \lambda^{2n-1} |F^{(n)}|^2.
$$
Taking the supremum over all $0 < \lambda < 1$ we obtain
$$
\sup_{0 < \lambda < 1} \partial_{\lambda} \int_{{\mathcal S}_{{\mathbb C}}^\prime} \big|SF(\lambda u)\big|^2 \,d\nu(u) =  2\sum_{n=1}^{\infty} n! n  |F^{(n)}|^2.   
$$
This leads to the following theorem: 
\begin{thm}\label{thsobo}
 	Let $F \in L^2(\mu)$, $m \in \mathbb{N}$.
 	Then $$F \in {\mathcal D}^{m,2}$$
$$ 	\Leftrightarrow$$
 	\begin{equation*}
     \sup_{0 < \lambda < 1} \partial^m_\lambda \left(\int_{{\mathcal S}_{{\mathbb C}}^\prime} \big|SF(\lambda u)\big|^2 \,d\nu(u) \right) < \infty.
     \end{equation*}
\end{thm}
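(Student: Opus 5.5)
By Lemma \ref{le1}(i), for $F\in L^2(\mu)$ and $0<\lambda<1$ we have
\[
g(\lambda):=\int_{{\mathcal S}_{{\mathbb C}}^\prime} \big|SF(\lambda u)\big|^2 \,d\nu(u)=\sum_{n=0}^\infty n!\,\lambda^{2n}|F^{(n)}|^2 .
\]
This is a power series in $\lambda$ with nonnegative coefficients $a_n:=n!|F^{(n)}|^2$ and $\sum_n a_n=\|F\|^2_{L^2(\mu)}<\infty$. Its radius of convergence is therefore at least $1$, so on $(0,1)$ we may differentiate term by term:
\[
\partial_\lambda^m g(\lambda)=\sum_{n:\,2n\ge m} a_n\,\frac{(2n)!}{(2n-m)!}\,\lambda^{2n-m}.
\]
All terms are nonnegative and increasing in $\lambda$. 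By monotone convergence, the supremum over $0<\lambda<1$ is the limit $\lambda\uparrow1$, and it equals
\[
\sum_{2n\ge m} a_n\,\frac{(2n)!}{(2n-m)!}\in[0,\infty].
\]
This extends the computation preceding the theorem from $m=1$ to general $m$.

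Next we compare weights. The falling factorial $p_m(n):=(2n)(2n-1)\cdots(2n-m+1)$ is a polynomial in $n$ of degree $m$ with leading coefficient $2^m$. Hence there exist constants $0<c_m\le C_m$ and $n_0$ such that $c_m n^m\le p_m(n)\le C_m(1+n^m)$ for $n\ge n_0$. For all $n$ with $2n\ge m$ we also have $p_m(n)\le 2^m n^m$, which follows from $(2n-j)\le 2n$.

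With these bounds the two directions follow. If $F\in{\mathcal D}^{m,2}$, then
\[
\sup_{0<\lambda<1}\partial^m_\lambda g\le 2^m\sum_n n^m a_n\le 2^m\|F\|_{m,2}^2<\infty
\]
by \eqref{eq:Dalpha2_chaos_norm}. Conversely, if the supremum is finite, then $\sum_{n\ge n_0} n^m a_n\le c_m^{-1}\sup_\lambda \partial_\lambda^m g<\infty$. The finitely many terms with $n<n_0$ are finite since $F\in L^2(\mu)$, and $\sum_n a_n<\infty$ as well. Together this gives $\sum_n(1+n^m)a_n<\infty$, i.e.\ $F\in{\mathcal D}^{m,2}$.

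The main point requiring care is the interchange of derivative, supremum and infinite sum when the right-hand side may be $+\infty$. Term-by-term differentiation is legitimate strictly inside $(0,1)$ by the radius of convergence. Identifying the supremum with $\sum_n p_m(n)a_n$ (possibly infinite) then uses only the positivity of the coefficients and monotone convergence as $\lambda\uparrow1$. No further analytic input is needed.
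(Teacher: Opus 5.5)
Your proof is correct and follows the same route as the paper: expand $\int|SF(\lambda u)|^2\,d\nu(u)=\sum_n n!\lambda^{2n}|F^{(n)}|^2$ via Lemma \ref{le1}(i), differentiate term by term, identify the supremum with the limit $\lambda\uparrow 1$, and compare the resulting polynomial weights with $1+n^m$. The only difference is that you handle general $m$ directly through the falling factorial $(2n)!/(2n-m)!\asymp n^m$, whereas the paper writes out $m=1$ and appeals to induction; your version makes that step, and the interchange of derivative, sum and supremum, explicit.
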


\noindent\textbf{Proof:}
We give the proof for $m=1$, the general case follows by induction.\\
Let $$\sup_{0 < \lambda < 1} \partial_\lambda \left(\int_{{\mathcal S}_{{\mathbb C}}^\prime} \big|SF(\lambda u)\big|^2 \,d\nu(u) \right) < \infty,$$ 
then we have  $$\infty>2\sum_{n=1}^{\infty} n! n  |F^{(n)}|^2  >\sum_{n=1}^{\infty} n! (n+1)  |F^{(n)}|^2.$$ Thus, $F \in {\mathcal D}^{1,2}$.\\[.3cm]
Now let $F\in {\mathcal D}^{1,2}$. Then
\begin{multline*}
    \sup_{0 < \lambda < 1}\partial_\lambda \left(\int_{{\mathcal S}_{{\mathbb C}}^\prime} \big|SF(\lambda u)\big|^2 \,d\nu(u) \right)
    = 2\sum_{n=1}^{\infty} n! n  |F^{(n)}|^2\\ \le 2\sum_{n=1}^{\infty} n! (n+1)  |F^{(n)}|^2 = 2\|F\|^2_{1,2} <\infty.
\end{multline*}
\hfill $\qed$

\begin{rmk}
    The spaces ${\mathcal D}^{m,2}$ are therefore very close to $L^2(\mu)$, indeed much more close than the space $\mathcal{G}$. This is due to the fact that for ${\mathcal D}^{m,2}$ a polynomial weight is needed for the characterization, while in every space topologizing the Potthoff-Timpel triple, the weights are of exponential type.
\end{rmk}

Using the Riemann-Liouville fractional derivative, we can even characterize the spaces ${\mathcal D}^{m+\alpha,2}$ for $\alpha\in (0,1)$, $m \in \mathbb{N}$.

\begin{thm}\label{thm:refined_Malliavin}
 	Let $F \in L^2(\mu)$, $\alpha \in (0,1)$, $m \in \mathbb{N}$. Then
 	$$F \in {\mathcal D}^{m+\alpha,2}$$ 
$$ 	\Leftrightarrow$$
 	\begin{equation*}
     \sup_{0 < \lambda < 1} D^\alpha_{0+} \partial^m_\lambda \left(\int_{{\mathcal S}_{{\mathbb C}}^\prime} \big|SF(\lambda u)\big|^2 \,d\nu(u) \right) < \infty.
     \end{equation*}
\end{thm}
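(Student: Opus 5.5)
Write $a_n:=n!\,|F^{(n)}|^2\ge 0$ and $\phi(\lambda):=\int_{\mathcal{S}'_{\mathbb C}}|SF(\lambda u)|^2\,d\nu(u)=\sum_{n\ge0}a_n\lambda^{2n}$, as in Lemma \ref{le1}(i). The plan is to compute $D^\alpha_{0+}\partial^m_\lambda\phi$ term by term on the power series, observe that every term is nonnegative and increasing in $\lambda$, and then read off the polynomial weight in $n$ at $\lambda\uparrow 1$, exactly as in the proof of Theorem \ref{thsobo}.

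\emph{Step 1 (derivatives of the series).} Since $\sum a_n<\infty$, the series converges locally uniformly on $[0,1)$ together with all $\lambda$-derivatives. Hence
\[
\partial^m_\lambda\phi(\lambda)=\sum_{2n\ge m}a_n\frac{(2n)!}{(2n-m)!}\lambda^{2n-m},\qquad 0<\lambda<1 .
\]
All exponents here are $\ge 0$, so the function is locally integrable at $0$ and the Riemann–Liouville operator $D^\alpha_{0+}=\partial_\lambda I^{1-\alpha}_{0+}$ is defined.

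\emph{Step 2 (fractional derivative of monomials).} Use $D^\alpha_{0+}\lambda^{\beta}=\frac{\Gamma(\beta+1)}{\Gamma(\beta+1-\alpha)}\lambda^{\beta-\alpha}$ for $\beta\ge0$. To justify the termwise interchange, apply $I^{1-\alpha}_{0+}$ termwise, which is allowed by monotone convergence since all terms are nonnegative. The resulting series $\sum_n c_n\lambda^{2n-m+1-\alpha}$ is again a power-type series converging locally uniformly on $(0,1)$ with its derivative, so we may differentiate termwise. This gives
\[
D^\alpha_{0+}\partial^m_\lambda\phi(\lambda)=\sum_{2n\ge m}a_n\frac{(2n)!}{\Gamma(2n-m+1-\alpha)}\lambda^{2n-m-\alpha}.
\]
One caveat: the term with $2n=m$ carries the factor $\lambda^{-\alpha}$, which blows up at $0$. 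That would make the supremum infinite, so either the statement should be read with the first at most $\lceil m/2\rceil$ chaos terms removed, or the supremum taken as $\lambda\uparrow1$ (the finitely many low-order terms are harmless for membership in $\mathcal D^{m+\alpha,2}$). I would state this normalization explicitly, e.g.\ by noting that only the behaviour near $\lambda=1$ matters, or by replacing $\sup$ with $\limsup_{\lambda\uparrow1}$ for those terms.

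\emph{Step 3 (comparison of weights).} All coefficients are nonnegative, and for $2n-m-\alpha>0$ the terms are increasing in $\lambda$. By monotone convergence, the supremum over $\lambda$ near $1$ therefore equals $\sum_n a_n w_n$ with $w_n=\frac{(2n)!}{\Gamma(2n-m+1-\alpha)}$ (up to finitely many terms). By the asymptotics $\Gamma(x+a)/\Gamma(x+b)\sim x^{a-b}$, we get $w_n\sim (2n)^{m+\alpha}$. Hence there are constants with $c(1+n^{m+\alpha})\le w_n\le C(1+n^{m+\alpha})$ for all large $n$, and finiteness of $\sum a_n w_n$ is equivalent to $\sum(1+n^{m+\alpha})a_n<\infty$, i.e.\ $F\in\mathcal D^{m+\alpha,2}$ by \eqref{eq:Dalpha2_chaos_norm}. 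Both implications follow at once, as in Theorem \ref{thsobo}.

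The main obstacle is Step 2: rigorously interchanging the nonlocal operator $D^\alpha_{0+}$ with the infinite sum (positivity and monotone convergence are the tools), together with the singular low-order term discussed there. The Gamma-ratio asymptotics of Step 3 are routine.
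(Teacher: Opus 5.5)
Your proposal is correct and follows essentially the paper's argument: termwise Riemann--Liouville differentiation of $\sum_n a_n\lambda^{2n}$ via Lemma~\ref{lefrac}, monotonicity in $\lambda$ to evaluate the supremum as $\lambda\uparrow 1$, then Lemmas~\ref{lemappOGamma} and~\ref{lemseriesconv}; the differences are that you handle general $m$ directly through $w_n=\Gamma(2n+1)/\Gamma(2n+1-m-\alpha)\sim(2n)^{m+\alpha}$ instead of proving $m=0$ and appealing to induction, and that you justify interchanging $D^\alpha_{0+}$ with the series, which the paper does not. Your caveat points to a genuine defect of the statement rather than of your proof: for even $m$ the term $2n=m$ contributes $\frac{m!\,a_{m/2}}{\Gamma(1-\alpha)}\lambda^{-\alpha}$, so already for $m=0$ and $F\equiv 1$ the supremum over $(0,1)$ is infinite although $F\in\mathcal D^{\alpha,2}$ (the paper hides this by starting the sum in \eqref{eq8} at $n=1$), and the theorem holds only with a normalization like yours, where removing that single term or taking $\limsup_{\lambda\uparrow 1}$ suffices because terms with $2n<m$ are annihilated by $\partial^m_\lambda$.
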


\noindent\textbf{Proof:}
We give the proof for $m=0$, the general case follows by an induction argument using Theorem \ref{thsobo}. \\
Let $$\sup_{0 < \lambda < 1} D^\alpha_{0+} \left(\int_{{\mathcal S}_{{\mathbb C}}^\prime} \big|SF(\lambda u)\big|^2 \,d\nu(u) \right) < \infty.$$ 
Then by Lemma \ref{le1}(i) together with Lemma \ref{lefrac} we have:  $$\sum_{n=1}^{\infty} n! \frac{\Gamma(2n+1)}{\Gamma(2n+1-\alpha)}  |F^{(n)}|^2 <\infty.$$
Now Lemma \ref{lemappOGamma} combined with Lemma \ref{lemseriesconv} implies
$$\infty> \sum_{n=1}^{\infty} n! (2n)^{\alpha} |F^{(n)}|^2 > \sum_{n=1}^{\infty} n! n^{\alpha}  |F^{(n)}|^2> \sum_{n=1}^{\infty} n! |F^{(n)}|^2.$$ 
Hence, we obtain: $$\sum_{n=1}^{\infty} n! (1+n^{\alpha})  |F^{(n)}|^2<\infty.$$
Thus, $F \in {\mathcal D}^{\alpha,2}$.\\[.3cm]
Now let $F\in {\mathcal D}^{\alpha,2}$. Then as above we have
\begin{eqnarray}\label{eq8}
\sup_{0 < \lambda < 1} D^\alpha_{0+}\left(\int_{{\mathcal S}_{{\mathbb C}}^\prime} \big|SF(\lambda Pu)\big|^2 \,d\nu(u) \right)
    = \sum_{n=1}^{\infty} n! \frac{\Gamma(2n+1)}{\Gamma(2n+1-\alpha)}   |F^{(n)}|^2.
\end{eqnarray}
The series in \eqref{eq8} is by Lemma \ref{lemappOGamma} together with Lemma \ref{lemseriesconv} convergent if and only if 
$$\sum_{n=1}^{\infty} n! n^{\alpha}  |F^{(n)}|^2<\infty.$$
Since
$$\sum_{n=1}^{\infty} n! n^{\alpha}  |F^{(n)}|^2 \le \sum_{n=1}^{\infty} n! (1+n^{\alpha})  |F^{(n)}|^2=\|F\|_{{\mathcal D}^{\alpha,2}}<\infty,$$
the assertion is shown.
\hfill $\qed$

We can also characterize the dual spaces of the $m$-times Malliavin differentiable functions:
\begin{thm}\label{thsobominus}
 	Let $\Phi \in \mathcal{G}'$, $m \in \mathbb{N}$.
 	Then $$\Phi \in {\mathcal D}^{-m,2}$$
$$ 	\Leftrightarrow$$
 	\begin{equation*}
 		\int_0^1 \int_0^{\lambda_m} \cdots \int_0^{\lambda_2} \int_{{\mathcal S}_{{\mathbb C}}^\prime} \big|S\Phi(\lambda_1 u)\big|^2 \,d\nu(u) \, d\lambda_1 \cdots d\lambda_m < \infty.
 	\end{equation*}
\end{thm}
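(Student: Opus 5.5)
We use Lemma \ref{le1}(i) to replace the Bargmann--Segal integral by its chaos series. Since $\Phi\in\mathcal G'$, all kernels $\Phi^{(n)}$ lie in $\widehat{L^2(\mathbb R^n,dx)}_{\mathbb C}$, and for every $\lambda\in(0,1)$
\[
g(\lambda):=\int_{{\mathcal S}_{{\mathbb C}}^\prime} \big|S\Phi(\lambda u)\big|^2 \,d\nu(u)=\sum_{n=0}^\infty n!\,\lambda^{2n}\,|\Phi^{(n)}|^2\in[0,\infty].
\]
Every term is nonnegative, so Tonelli's theorem lets us interchange the $m$-fold iterated integral with the sum, whether or not the result is finite. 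This holds even on the portion of $(0,1)$ where $g$ might be infinite.

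Next we compute the iterated integral of a monomial explicitly. Integrating $\lambda_1^{2n}$ successively over $0<\lambda_1<\lambda_2<\dots<\lambda_m<1$ gives
\[
\int_0^1\int_0^{\lambda_m}\!\!\cdots\int_0^{\lambda_2}\lambda_1^{2n}\,d\lambda_1\cdots d\lambda_m=\frac{1}{(2n+1)(2n+2)\cdots(2n+m)}=\frac{\Gamma(2n+1)}{\Gamma(2n+m+1)}.
\]
This can be verified by a one-line induction on $m$, and it is the Riemann--Liouville integral $I^m_{0+}$ evaluated at $\lambda=1$. Hence the condition in the theorem is equivalent to
\[
\sum_{n=0}^\infty n!\,|\Phi^{(n)}|^2\,\frac{1}{(2n+1)\cdots(2n+m)}<\infty.
\]

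It remains to compare these weights with $1+n^{-m}$, which for $\alpha=-m$ appear in \eqref{eq:Dalpha2_chaos_norm}. The $n=0$ term requires a convention, since $n^{-m}$ is undefined there. Either way it contributes only the finite quantity $|\Phi^{(0)}|^2$ up to a constant, so we drop it.

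For $n\ge1$ we have the two-sided bound
\[
(2n)^m\le(2n+1)\cdots(2n+m)\le(3n)^m.
\]
The weights $w_n=1/\big((2n+1)\cdots(2n+m)\big)$ therefore satisfy $3^{-m}n^{-m}\le w_n\le 2^{-m}n^{-m}$. Consequently $\sum n!\,w_n|\Phi^{(n)}|^2<\infty$ if and only if $\sum_{n\ge1} n!\,n^{-m}|\Phi^{(n)}|^2<\infty$.

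The last step is to reconcile this with the definition of $\mathcal D^{-m,2}$, and this is the delicate point. Read literally, the weight $1+n^{-m}$ is dominated by $1$, so finiteness of the $\mathcal D^{-m,2}$ norm would force $\Phi\in L^2(\mu)$. That is clearly not the intended meaning. The dual space of $\mathcal D^{m,2}$, equivalently the domain of $(I+N)^{-m/2}$, carries the weights $(1+n)^{-m}\asymp n^{-m}$ for $n\ge1$.

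I will therefore interpret $\mathcal D^{-m,2}$ through the norm $\sum(1+n)^{-m}n!\,|\Phi^{(n)}|^2$, equivalently through the weight $n^{-m}$ for $n\ge1$ plus the constant term. This is the interpretation under which the theorem is meaningful, and with it the equivalence of the two conditions closes the proof. Checking that this interpretation is consistent with the paper's notation is the only real obstacle; the analytic steps above are routine.
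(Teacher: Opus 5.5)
Your proof is correct and follows the paper's route. Lemma~\ref{le1}(i) plus Tonelli turns the condition into $\sum_n n!\,|\Phi^{(n)}|^2\,\Gamma(2n+1)/\Gamma(2n+m+1)$, which is then compared with the weights $(1+n)^{-m}$. That is exactly the reading the paper itself silently adopts in its proof, where it writes $\|\Phi\|_{-1,2}^2=\sum_n n!\,|\Phi^{(n)}|^2/(n+1)$. The only difference is that you handle general $m$ directly, whereas the paper does $m=1$ and appeals to induction.

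One small slip: $(2n+1)\cdots(2n+m)\le(3n)^m$ holds only for $n\ge m$. Use $2n+k\le(k+2)n$ instead, or simply note that finitely many terms do not affect convergence.
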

\noindent\textbf{Proof:}
We give the proof for $m=1$, the general case follows by induction.\\
Since $\Phi \in \mathcal{G}'$, by Lemma \ref{le1}(i) we have that
\begin{eqnarray*}
		[0,1] \ni \lambda \mapsto \int_{{\mathcal S}_{{\mathbb C}}^\prime} \big|S\Phi(\lambda u)\big|^2 \,d\nu(u) = \sum_{n=0}^{\infty} n! \lambda^{2n} |\Phi^{(n)}|^2 \in [0, \infty]
\end{eqnarray*}
is well-defined and monotonically increasing. 
Let $$\int_0^1 \int_{{\mathcal S}_{{\mathbb C}}^\prime} \big|S\Phi(\lambda u)\big|^2 \,d\nu(u) \,d\lambda < \infty,$$ 
then we have via Fubini 
\begin{multline*}
		\infty > \int_0^1 \sum_{n=0}^{\infty} n! \lambda^{2n} |\Phi^{(n)}|^2 \, d\lambda
        = \sum_{n=0}^{\infty} n! \int_0^1 \lambda^{2n} \,d\lambda \, |\Phi^{(n)}|^2 \\
        = \sum_{n=0}^{\infty} \frac{n!}{2n+1} |\Phi^{(n)}|^2
        \ge \frac{1}{2}\sum_{n=0}^{\infty} \frac{n!}{n+1} |\Phi^{(n)}|^2 
        = \frac{\|\Phi\|^2_{-1,2}}{2}.
\end{multline*}
Thus, $\Phi \in {\mathcal D}^{-1,2}$.\\[.3cm]
Now, let $\Phi \in {\mathcal D}^{-1,2}$. Then
\begin{eqnarray*}
\int_0^1 \int_{{\mathcal S}_{{\mathbb C}}^\prime} \big|S\Phi(\lambda Pu)\big|^2 \,d\nu(u) \,d\lambda
    = \sum_{n=0}^{\infty} \frac{n!}{2n+1} |\Phi^{(n)}|^2
        \le \sum_{n=0}^{\infty} \frac{n!}{n+1} |\Phi^{(n)}|^2 
        = \|\Phi\|^2_{-1,2} < \infty. 
\end{eqnarray*}
\hfill $\qed$

Again using the Riemann-Liouville fractional integral, we can refine the characterization of the dual spaces for any $\alpha\in (0,1)$. We have: 

\begin{thm}\label{fraccharD}
 	Let $\Phi \in \mathcal{G}'$, $\alpha \in (0,1)$, $m \in \mathbb{N}$.
 	Then $$\Phi \in {\mathcal D}^{-m-\alpha,2}$$
$$ 	\Leftrightarrow$$
 	\begin{equation*}
 		\frac{1}{\Gamma(\alpha)}\int_0^1 \frac{1}{(1-\lambda_{m+1})^{1-\alpha}} \int_0^{\lambda_{m+1}} \cdots \int_0^{\lambda_2} \int_{{\mathcal S}_{{\mathbb C}}^\prime} \big|S\Phi(\lambda_1 u)\big|^2 \,d\nu(u) \, d\lambda_1 \cdots d\lambda_{m+1}  < \infty.
 	\end{equation*}
\end{thm}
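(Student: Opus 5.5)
The plan is to follow the proof of Theorem~\ref{thsobominus}: expand the $\nu$-integral in chaos and compute the iterated $\lambda$-integrals term by term. Since $\Phi\in\mathcal G'$, Lemma~\ref{le1}(i) gives, for every $\lambda\in[0,1]$,
\[
\int_{{\mathcal S}_{{\mathbb C}}^\prime} \big|S\Phi(\lambda u)\big|^2 \,d\nu(u)=\sum_{n=0}^\infty n!\,\lambda^{2n}|\Phi^{(n)}|^2\in[0,\infty].
\]
All summands are nonnegative. Tonelli's theorem therefore lets us interchange the sum with all $m+1$ integrations, including the one against the positive weight $(1-\lambda_{m+1})^{\alpha-1}$. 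This holds with the convention that both sides may be $+\infty$, so no a priori finiteness is needed. Both directions of the equivalence then reduce to the convergence of one explicit series of nonnegative terms.

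\textbf{Step 1: the iterated integrals.} For a single monomial, integrating $\lambda_1^{2n}$ successively over $[0,\lambda_2],\dots,[0,\lambda_{m+1}]$ gives
\[
\frac{\lambda_{m+1}^{2n+m}}{(2n+1)(2n+2)\cdots(2n+m)}=\frac{\Gamma(2n+1)}{\Gamma(2n+m+1)}\,\lambda_{m+1}^{2n+m}.
\]
This is the same computation as in Theorem~\ref{thsobominus}.

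\textbf{Step 2: the outer integral.} The remaining integral is the Riemann--Liouville integral $I^\alpha_{0+}$ evaluated at $1$. It is a Beta integral:
\[
\frac{1}{\Gamma(\alpha)}\int_0^1(1-\lambda)^{\alpha-1}\lambda^{2n+m}\,d\lambda=\frac{B(\alpha,2n+m+1)}{\Gamma(\alpha)}=\frac{\Gamma(2n+m+1)}{\Gamma(2n+m+1+\alpha)}.
\]
Combining Steps 1 and 2, the quantity in the statement equals
\[
\sum_{n=0}^\infty n!\,\frac{\Gamma(2n+1)}{\Gamma(2n+1+m+\alpha)}\,|\Phi^{(n)}|^2 .
\]
This is the negative-order analogue of the identity~\eqref{eq8}.

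\textbf{Step 3: comparison of weights.} It remains to show there are constants $0<c\le C<\infty$, depending only on $m$ and $\alpha$, such that
\[
c\,(1+n^{m+\alpha})^{-1}\le \frac{\Gamma(2n+1)}{\Gamma(2n+1+m+\alpha)}\le C\,(1+n^{m+\alpha})^{-1}\qquad\text{for all } n\in\mathbb N_0 .
\]
By \eqref{eq:Dalpha2_chaos_norm} with exponent $-m-\alpha$, this shows the series above is finite if and only if $\|\Phi\|_{-m-\alpha,2}<\infty$, which proves both directions at once.

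For large $n$, the asymptotics $\Gamma(x)/\Gamma(x+\beta)\sim x^{-\beta}$ (Lemma~\ref{lemappOGamma}) give $\Gamma(2n+1)/\Gamma(2n+1+m+\alpha)\sim(2n)^{-m-\alpha}$, which is comparable to $(1+n^{m+\alpha})^{-1}$. Finitely many $n$, including $n=0$ with value $1/\Gamma(1+m+\alpha)>0$, are handled by taking minima and maxima of positive numbers.

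I expect Step 3 to be the only delicate point, because equivalence requires two-sided bounds that hold uniformly in $n$, not merely the asymptotic rate. If Lemma~\ref{lemappOGamma} only supplies asymptotics, I would first try Wendel's inequality for the fractional factor $\Gamma(2n+m+1)/\Gamma(2n+m+1+\alpha)$, and write the integer part $\Gamma(2n+1)/\Gamma(2n+m+1)$ explicitly as the reciprocal of a product of $m$ linear factors. Together these give explicit uniform constants, and Lemma~\ref{lemseriesconv} then turns the comparison of weights into the equivalence of convergence.
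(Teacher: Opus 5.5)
Your proposal is correct and is essentially the paper's argument: Lemma~\ref{le1}(i) with Tonelli, the Beta integral of Lemma~\ref{lefrac}, and then Lemmas~\ref{lemappOGamma} and~\ref{lemseriesconv}; the one difference is that you carry out the $m$ inner integrations explicitly to get the weight $\Gamma(2n+1)/\Gamma(2n+1+m+\alpha)$, whereas the paper treats only $m=0$ and leaves general $m$ to an unstated induction with Theorem~\ref{thsobominus}. One precision: the weight $(1+n^{m+\alpha})^{-1}$ you compare against is the dual weight the paper itself uses in its proofs, not a literal substitution of the exponent $-m-\alpha$ into \eqref{eq:Dalpha2_chaos_norm} (which would give $1+n^{-m-\alpha}$), so you should state this interpretation of $\|\cdot\|_{-m-\alpha,2}$ explicitly.
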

\noindent\textbf{Proof:}
We give the proof for $m=0$ and $\alpha\in(0,1)$, the general case follows by an induction argument together with Theorem \ref{thsobominus}.\\
Since $\Phi \in \mathcal{G}'$, by Lemma \ref{le1}(i) we have that
\begin{eqnarray*}
		[0,1] \ni \lambda \mapsto \int_{{\mathcal S}_{{\mathbb C}}^\prime} \big|S\Phi(\lambda u)\big|^2 \,d\nu(u) = \sum_{n=0}^{\infty} n! \lambda^{2n} |\Phi^{(n)}|^2 \in [0, \infty]
\end{eqnarray*}
is well-defined and monotonically increasing. 
Let $$\frac{1}{\Gamma(\alpha)}\int_0^1 \frac{1}{(1-\lambda)^{1-\alpha}} \int_{{\mathcal S}_{{\mathbb C}}^\prime} \big|S\Phi(\lambda u)\big|^2 \,d\nu(u) \,d\lambda < \infty,$$ 
then we have via Fubini 
\begin{multline*}
	\infty >\frac{1}{\Gamma(\alpha)}\int_0^1 \frac{1}{(1-\lambda)^{1-\alpha}} \int_{{\mathcal S}_{{\mathbb C}}^\prime} \big|S\Phi(\lambda u)\big|^2 \,d\nu(u) \,d\lambda \\
        = \sum_{n=0}^{\infty} n! \frac{1}{\Gamma(\alpha)}\int_0^1 \frac{1}{(1-\lambda)^{1-\alpha}}  \lambda^{2n}
       \,d\lambda \, |\Phi^{(n)}|^2
        =\sum_{n=0}^{\infty} n! \frac{\Gamma(2n+1)}{\Gamma(2n+1+\alpha)} |\Phi^{(n)}|^2,    
\end{multline*}
by Lemma \ref{lefrac}.	
This implies the convergence of the series
$$
\sum_{n=0}^{\infty} \frac{n!}{1 + n^{\alpha}} |\Phi^{(n)}|^2 
        = \|\Phi\|^2_{-\alpha,2},
$$
by Lemma \ref{lemappOGamma} and Lemma \ref{lemseriesconv}.
Thus, $\Phi \in {\mathcal D}^{-\alpha,2}$.\\[.3cm]
Now let $\Phi \in {\mathcal D}^{-\alpha,2}$. Then
\begin{eqnarray*}
\frac{1}{\Gamma(\alpha)}\int_0^1 \frac{1}{(1-\lambda)^{1-\alpha}} \int_{{\mathcal S}_{{\mathbb C}}^\prime} \big|S\Phi(\lambda Pu)\big|^2 \,d\nu(u) \,d\lambda
    = \sum_{n=0}^{\infty} \frac{\Gamma(2n+1)}{\Gamma(2n+1+\alpha)} |\Phi^{(n)}|^2,
\end{eqnarray*}
which by Lemma \ref{lemappOGamma} and Lemma \ref{lemseriesconv} is convergent if and only if
\begin{equation}\label{eq42}
\sum_{n=0}^{\infty} \frac{n!}{1 + n^{\alpha}} |\Phi^{(n)}|^2 <\infty.
\end{equation} 
But the series in \eqref{eq42} just sums up to $\|\Phi\|^2_{-\alpha,2}$, which is finite by assumption.
\hfill $\qed$





\section{Applications}
\begin{rmk}\label{rm11}
Let $d\in \mathbb{N}$.  Donsker's delta for a process $\langle \eta, \cdot \rangle:=(\langle \eta_1, \cdot_1 \rangle,\dots,\langle \eta, \cdot_d \rangle), \eta \in L^2 (\mathbb{R},\mathbb{R}^d,dx)$, $\eta_k \neq 0, 1 \le k \le d$,
is defined as a generalized random variable on $\mathcal{S}^{\prime}(\mathbb{R}, \mathbb{R}^d)$ given by the product of the Donsker's deltas of the component processes, see e.g. \cite{SW93}:
\begin{eqnarray*}
 \delta (\langle \eta_1, \cdot_1 \rangle,\dots,\langle \eta_d, \cdot_d \rangle) = \prod_{k=1}^d \delta(\langle \eta_k, \cdot_k \rangle).
\end{eqnarray*}
Its $S$-transform in $\xi\in \mathcal{S}(\mathbb{R}, \mathbb{R}^d)$ is then given by
\begin{eqnarray}\label{STDD}
\prod_{k=1}^d \frac{1}{\sqrt{(2\pi) \langle\eta_k, \eta_k\rangle}} \exp\bigg(-\frac{1}{2 \langle\eta_k, \eta_k\rangle} \langle \xi_k, \eta_k \rangle^2)\bigg).
\end{eqnarray}
Hence, by denoting $\nu_d$ the measure on ${\mathcal S}_{{\mathbb C}}^\prime(\mathbb{R}, \mathbb{R}^d)$ obtained by the $d$-times product of the measure $\nu$ on ${\mathcal S}_{{\mathbb C}}^\prime$ and $P_k$ the k-th component of a orthogonal projection $P:{\mathcal S}_{{\mathbb C}}^\prime(\mathbb{R}, \mathbb{R}^d) \to {\mathcal S}_{{\mathbb C}}(\mathbb{R}, \mathbb{R}^d)$ we obtain
\begin{eqnarray}
   && \sup_{P\in \mathbb{P}} \int_{\mathcal{S}^\prime_{{\mathbb C}}(\mathbb{R}, \mathbb{R}^d)} \Bigg|S\delta (\langle \eta_1, \cdot_1 \rangle,\dots,\langle \eta_d, \cdot_d \rangle)(\lambda Pu) \Bigg|^2 d\nu_d(u) \nonumber \\
   &=&\sup_{P\in \mathbb{P}}\int_{\mathcal{S}^\prime_{{\mathbb C}}(\mathbb{R}, \mathbb{R}^d)} \Bigg|\prod_{k=1}^d \frac{1}{\sqrt{(2\pi) \langle\eta_k, \eta_k\rangle}} \exp\bigg(-\frac{1}{2 \langle\eta_k, \eta_k\rangle} \langle \lambda P_k u_k, \eta_k \rangle^2)\bigg)\Bigg|^2\, d\nu_d(u) \nonumber \\
       &=&\sup_{P\in \mathbb{P}}\prod_{k=1}^d \int_{\mathcal{S}^\prime_{{\mathbb C}}(\mathbb{R})} \Bigg| \frac{1}{\sqrt{(2\pi) \langle\eta_k, \eta_k\rangle}} \exp\bigg(-\frac{1}{2 \langle\eta_k, \eta_k\rangle} \langle \lambda P_k u_k, \eta_k \rangle^2)\bigg)\Bigg|^2\, d\nu(u_k),\label{productdonsk}
\end{eqnarray}
where in Equality \eqref{productdonsk} we used the independence of the component processes. 
Now we see from \eqref{productdonsk} that the existence of a Donsker's delta as a generalized function in $\mathcal{G}'$ in the $d$-dimensional case can be reduced to showing existence of Donsker's delta in $\mathcal{G}'$ in the one $1$-dimensional setting, which is due to \cite{GKS97}.
\end{rmk}

\begin{thm}[Regularity of Donsker's delta]
For any centered $\mathbb{R}^d$-valued Gaussian process $(X_t)_{t \ge 0}$ having independent components  $X^{k}_t = \langle f^{k}_t, \cdot\rangle, k=1,\dots,n$, with $0 \neq f^{(k)}_t \in L^2(\mathbb{R}, dx)$, $t \ge 0$, we have that
 $$\delta(X_t) \in \mathcal{D}^{-m,2}\setminus \mathcal{D}^{-\frac{d}{2},2}, \text{ for all }\,  m>\frac{d}{2}.$$
\end{thm}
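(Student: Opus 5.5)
We fix $t$, put $\sigma_k:=|f^{(k)}_t|^2>0$, and compute the Bargmann--Segal function $\lambda\mapsto \int|S\delta(X_t)(\lambda u)|^2\,d\nu_d(u)$ explicitly. We then read off which polynomial weights $n^{-m}$ make the chaos series converge, using Theorem \ref{thsobominus} and the chaos norm \eqref{eq:Dalpha2_chaos_norm}. By Remark \ref{rm11} (independence of the components) this function factorizes into a product of $d$ one-dimensional factors. By Lemma \ref{le1}(i) the supremum over $P$ can be dropped, since $\delta(X_t)\in\mathcal G'$ by \cite{GKS97}.

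\textbf{One-dimensional factor.} For $u\in\mathcal S'_{\mathbb C}$ the variable $z=\langle f,u\rangle/|f|$ is a standard complex Gaussian under $\nu$, with $E|z|^2=1$. Each factor is therefore
\[
\frac{1}{2\pi\sigma}\int_{\mathbb C}\big|e^{-\lambda^2 z^2/2}\big|^2\,d\gamma(z)
=\frac{1}{2\pi\sigma}\,E\,e^{-\lambda^2\Re(z^2)}.
\]
Write $z=(x+iy)/\sqrt2$ with $x,y$ i.i.d.\ standard normal, so that $\Re z^2=(x^2-y^2)/2$. A Gaussian integral gives
\[
E\,e^{-\lambda^2\Re(z^2)}=(1+\lambda^2)^{-1/2}(1-\lambda^2)^{-1/2}=(1-\lambda^4)^{-1/2},\qquad 0\le\lambda<1 .
\]
Hence
\[
\int|S\delta(X_t)(\lambda u)|^2\,d\nu_d(u)=C\,(1-\lambda^4)^{-d/2}.
\]
We cross-check this against the chaos expansion $\sum_n n!\lambda^{2n}|\Phi^{(n)}|^2$. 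The identity means that $c_n:=\sum_{|j|=n}n!|\Phi^{(n)}|^2$ equals $C$ times the coefficient of $\lambda^{2n}$ in $(1-\lambda^4)^{-d/2}$. Odd chaoses vanish, and for $n=2k$ we get $c_{2k}=C\binom{k+d/2-1}{k}\sim C' k^{d/2-1}$.

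\textbf{Conclusion.} We have $\Phi\in\mathcal D^{-m,2}$ if and only if $\sum_k (2k)^{-m} k^{d/2-1}<\infty$, that is, if and only if $m>d/2$; at $m=d/2$ the series becomes harmonic and diverges. Equivalently, via Theorem \ref{thsobominus} and Theorem \ref{fraccharD}, we could test integrability of iterated (fractional) integrals of $(1-\lambda^4)^{-d/2}$ near $\lambda=1$. The $m$-fold integral behaves like $(1-\lambda)^{m-d/2}$, which is integrable exactly when $m>d/2$. This presentation matches the paper's style. The inclusion for non-integer $m$ uses Theorem \ref{fraccharD}, and the exclusion at $-d/2$ (possibly fractional) uses the same chaos count, or the borderline divergence of $\int_0^1(1-\lambda)^{-1}\,d\lambda$.

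\textbf{Main obstacle.} The step I expect to be hardest is justifying the Gaussian computation $E\,e^{-\lambda^2\Re z^2}=(1-\lambda^4)^{-1/2}$ with the paper's normalization of $\nu$ (covariance $1/2$ per component) and keeping track of constants. Only the exponent $-d/2$ matters, however, so the constants are irrelevant. I would also double-check the asymptotics of the binomial coefficients via $\Gamma(k+d/2)/(\Gamma(k+1)\Gamma(d/2))\sim k^{d/2-1}/\Gamma(d/2)$.
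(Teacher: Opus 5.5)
Your proof is correct, but after the common first step it takes a different route from the paper. You and the paper compute the same Bargmann--Segal function $C(1+\lambda^2)^{-d/2}(1-\lambda^2)^{-d/2}$, and both rely on $\delta(X_t)\in\mathcal{G}'$ to dispense with the supremum over $P$.

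The paper then feeds this function into Theorems~\ref{thsobominus} and~\ref{fraccharD}:
\begin{itemize}
\item it discards the factor $(1+\lambda^2)^{-d/2}$ because it is bounded above and below;
\item it treats $d$ even and $d$ odd separately, iterating primitives of $(1-\lambda)^{-d/2}$ until it reaches $\ln(1-x)$, respectively $(1-x)^{-1/2}$;
\item it finishes with a Riemann--Liouville integral of order $\alpha$ at $\lambda=1$ (an integration by parts for $\int_0^1\ln(y)\,y^{\alpha-1}\,dy$, respectively the threshold $\alpha>\tfrac12$).
\end{itemize}
You instead keep the exact form $C(1-\lambda^4)^{-d/2}$ and expand it binomially. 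Lemma~\ref{le1}(i) and uniqueness of power series coefficients then give the chaos weights exactly: $c_{2k}=C\binom{k+d/2-1}{k}$ and $c_{2k+1}=0$. Membership in $\mathcal{D}^{-m,2}$ becomes a direct comparison with $\sum_k k^{d/2-1-m}$.

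What your route buys:
\begin{itemize}
\item it treats all real $m$ and both parities of $d$ at once;
\item it needs no fractional calculus;
\item it gives the sharp equivalence $\delta(X_t)\in\mathcal{D}^{-m,2}\Leftrightarrow m>d/2$, with the borderline case visibly a harmonic series.
\end{itemize}
The price is that it amounts to recomputing the chaos decomposition (for Donsker's delta you could equally expand $S\delta$ directly). So it bypasses the characterization the paper is advertising. The paper's route only needs two-sided bounds on the Bargmann--Segal function near $\lambda=1$; that is exactly how it disposes of $(1+\lambda^2)^{-d/2}$. Such bounds remain usable when the exact coefficients are not available.

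Two minor points, neither of which affects your main chaos-count argument.
\begin{itemize}
\item When you invoke \eqref{eq:Dalpha2_chaos_norm}, say explicitly that for negative order you use the weight $(1+n)^{-m}$. This is what the paper actually uses in the proofs of Theorems~\ref{thsobominus} and~\ref{fraccharD}. Read literally with $\alpha=-m$, that formula gives the weight $1+n^{-m}\ge 1$, which would force $\mathcal{D}^{-m,2}\subset L^2(\mu)$ and make the statement false.
\item In your paper-style aside there is an off-by-one. It is the $(m-1)$-fold primitive, of order $(1-\lambda)^{m-1-d/2}$, that must be integrable on $(0,1)$; equivalently, the $m$-fold one must stay finite at $\lambda=1$. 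Integrability of $(1-\lambda)^{m-d/2}$ would only give $m>d/2-1$.
\end{itemize}
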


\begin{proof} We first consider the one dimensional case.
Our aim is to apply Theorem \ref{fraccharD}. We know from Remark \ref{rm11} that Donsker's delta is in $\mathcal{G}'$ and its explicit form of the $S$-transform, see \eqref{STDD}. Hence, we are left to show integrability of
\begin{multline*}
    [0,1] \ni \lambda \mapsto \int_{\mathcal{S}^\prime_{\mathbb{C}}} |S(\delta(\langle f_t, \cdot\rangle))(\lambda u) |^2 \, d\nu(u) \\
    = \frac{1}{2\pi \|f_t\|^2} \int_{\mathcal{S}^{\prime}_{\mathbb{C}}} \exp\left(-\frac{\lambda^2}{2} \left\langle u, \frac{f_t}{\|f_t\|} \right\rangle^2 -\frac{\lambda^2}{2} \left\langle \overline{u}, \frac{f_t}{\|f_t\|} \right\rangle^2\right) d\nu(u).
\end{multline*}
Writing $u=v+iw$, $v, w \in \mathcal{S}^\prime$, we obtain $\langle u, \frac{f_t}{\|f_t\|} \rangle= \langle v, \frac{f_t}{\|f_t\|} \rangle +i \langle w, \frac{f_t}{\|f_t\|} \rangle$. Hence, using the image measures of these random variables and independence of imaginary and real part we obtain:
\begin{eqnarray}
 &&\int_{\mathcal{S}^{\prime}_{\mathbb{C}}} \exp\left(-\frac{\lambda^2}{2} \left(\left\langle u, \frac{f_t}{\|f_t\|} \right\rangle^2+ \left\langle \overline{u}, \frac{f_t}{\|f_t\|} \right\rangle^2 \right)\right) \,d\nu(u) \notag\\
    &=& \frac{1}{\pi}\int_{\mathbb{R}^2} \exp(-\lambda^2(x^2-y^2)) \exp(- x^2 -y^2) \, dx \, dy \notag\\
      &=&   \frac{1}{\pi}\int_{\mathbb{R}^2} \exp(-x^2 (\lambda^2+1) -y^2(1-\lambda^2)) \, dx \, dy \notag\\
      &=& \frac{1}{\sqrt{\lambda^2+1}}\frac{1}{\sqrt{1-\lambda^2}}\label{eq43}.
\end{eqnarray}

\noindent Let us now consider an arbitrary $d\in \mathbb{N}$. Accordingly to Equation \eqref{productdonsk} and \eqref{eq43} we obtain
$$\|S(\delta(\langle X_t, \cdot\rangle))(\lambda \cdot)\|^2_{L^2(\nu)} =\prod_{k=1}^d \left(\frac{1}{2\pi \left\|f^{(k)}_t\right\|^2} \frac{1}{\sqrt{1+\lambda^{2}}}\frac{1}{\sqrt{1-\lambda^{2}}}\right).$$
Hence, to apply Theorem \ref{fraccharD}, we have to check integrability of
$$[0,1] \ni \lambda \mapsto \frac{1}{(\sqrt{1-\lambda^{2}})^d},$$
since the other terms do not have influence on integrability. They can be bounded from above and from below away from zero by suitable constants.

Let $d=2n$. In that case is
$$\frac{1}{(\sqrt{1-\lambda^{2}})^d} =\frac{1}{(1-\lambda^{2})^n}=\frac{1}{(1+\lambda)^n}\frac{1}{(1-\lambda)^n}.$$
For the finiteness of the integral just the second term is of interest again. We have
$$\int_0^x \frac{1}{(1-\lambda)^n} \, d\lambda = \int_{1-x}^1 \frac{1}{y^n} \, dy=  -\frac{1}{(n-1)(1-x)^{n-1}}+\frac{1}{n-1}, \quad 0 \le x < 1.$$
By $n$-times iterated integration, we obtain except for some finite additive constant and prefactor the primitive function $$\ln(1-x),$$
which tends to $-\infty$ for $x$ approaching 1.
Hence we need to apply another Riemann--Liouville fractional integral of order $\alpha$. Omitting a finite prefactor we have
$$ \int_0^1 \ln(1-\lambda)(1-\lambda)^{\alpha-1} \, d \lambda = \int_0^1 \ln(y)\,y^{\alpha-1}\,dy.$$

\noindent Integration by parts yields
\begin{align*}
\lim_{s\searrow 0}\int_s^1 \ln(y)y^{\alpha-1}dy 
&= \lim_{s\searrow 0} \frac{y^\alpha}{\alpha}\ln(y)\bigg|_{s}^{1}
-\lim_{s\searrow 0}\int_s^1 \frac{y^\alpha}{\alpha} \frac{1}{y}dy \\
&= \frac{1}{\alpha}\ln(1) - \lim_{s\searrow 0} \frac{s^\alpha}{\alpha}\ln(s) - \frac{1}{\alpha}\int_0^1 y^{\alpha-1}\,dy=-\frac{1}{\alpha^2},
\end{align*}
which is a finite value. Thus, for $d=2n$ we have $\delta(X_t) \in \mathcal{D}^{-\frac{d}{2}-\varepsilon,2}, \text{ for all } \varepsilon>0.$

For $d=2n+1$, we write: 
$$\frac{1}{(\sqrt{1-\lambda^{2}})^d} =\frac{1}{(1-\lambda^{2})^n}\frac{1}{\sqrt{1-\lambda^{2}}}=\frac{1}{(1-\lambda)^{n+\frac{1}{2}}}\frac{1}{(1+\lambda)^{n+\frac{1}{2}}}.$$
For the finiteness of the integral only the first term is of interest. We have
$$\int_0^x \frac{1}{(1-\lambda)^{n+\frac{1}{2}}} \, d\lambda = \int_{1-x}^1 \frac{1}{y^{n+\frac{1}{2}}} \, dy=  -\frac{1}{({n-\frac{1}{2}})(1-x)^{{n-\frac{1}{2}}}}+\frac{1}{{n-\frac{1}{2}}}, \quad 0 \le x < 1.$$
Hence, by $n$-times iterated integration and again omitting a finite additive constant and prefactor, we obtain the primitive function
$$ \frac{1}{\sqrt{1-x}},$$
which is divergent for $x$ to 1. Applying the Riemann--Liouville integral of order $\alpha$ we obtain
$$\int_0^1 \frac{(1-\lambda)^{\alpha -1}}{\sqrt{1-\lambda}} d\lambda = \int_0^1 (1-\lambda)^{\alpha -\frac{3}{2}} d\lambda,$$
which is finite if and only if $-1<\alpha -\frac{3}{2}$, i.e., $\frac{1}{2}<\alpha.$ Thus, for $d=2n+1$ we have $\delta(X_t) \in \mathcal{D}^{-\frac{d}{2}-\varepsilon,2},$ for all $\varepsilon>0$.\end{proof} 

After showing regularity of Donsker's delta, we now analyze self-intersection local times. The aim is to show Malliavin smoothness. Here we concentrate on the case $d=1$, for the sake of simplicity.

\begin{defn}\label{defn_SILT}
    The self-intersection local time of a centered Gaussian process $X_t=\langle f_t, \cdot \rangle$, $0 \neq f_t \in L^{2}(\mathbb{R}, dx)$, $0 \le t \le T$, is defined by
$$    \int_0^T \int_0^t \delta(X_t-X_s) \, dsdt$$
in the sense of a Bochner integral in a suitable Hilbert space being a subset of $(\mathcal{S})^\prime$.
\end{defn}
\begin{defn}\label{defn:fBm}
    The centered Gaussian process $B^H$ with covariance
    $$
    \mathbb{E}(B_t^H B_s^H)=\frac{1}{2}\bigg( |t|^{2H} + |s|^{2H} -|t-s|^{2H}\bigg),\quad t,s >0,
    $$
    is called a one-dimensional fractional Brownian motion with Hurst parameter $H\in (0,1)$.
\end{defn}
\begin{thm}\cite{HN05, Hu2001}
For $d=1$ the self-intersection local time of a fractional Brownian motion with Hurst parameter $H\in (0,1)$ is in $\mathcal{D}^{1,2}$ for all $H\in (0,1)$.
\end{thm}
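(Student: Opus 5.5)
The plan is to apply Theorem \ref{thsobo} with $m=1$. Writing $L:=\int_0^T\int_0^t \delta(B^H_t-B^H_s)\,ds\,dt$, it suffices to show two things. First, $L\in L^2(\mu)$. Second,
\[
\sup_{0<\lambda<1}\partial_\lambda \int_{\mathcal S'_{\mathbb C}} |SL(\lambda u)|^2\,d\nu(u)<\infty .
\]
Since $\lambda\mapsto \int|SL(\lambda u)|^2d\nu(u)=\sum_n n!\lambda^{2n}|L^{(n)}|^2$ has nonnegative coefficients, its derivative is increasing in $\lambda$. The supremum is therefore the monotone limit as $\lambda\nearrow 1$, and both conditions together amount to $\sum_n n!\,n\,|L^{(n)}|^2<\infty$. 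We may thus compute the Bargmann--Segal norm at $\lambda<1$ and control $\partial_\lambda$ uniformly.

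\textbf{Step 1: reduce to a double integral of explicit kernels.} Put $f_{s,t}:=\mathbb 1_{[s,t]}$-type representing functions with $B^H_t-B^H_s=\langle f_t-f_s,\cdot\rangle$, and $\sigma^2_{s,t}:=|f_t-f_s|^2=|t-s|^{2H}$. By \eqref{STDD},
\[
S\delta(X_t-X_s)(\xi)=\frac{1}{\sqrt{2\pi}\,\sigma_{s,t}}\exp\Bigl(-\frac{\langle \xi,f_t-f_s\rangle^2}{2\sigma_{s,t}^2}\Bigr).
\]
I would expand $|SL(\lambda u)|^2$ as a fourfold integral over $0<s<t<T$, $0<s'<t'<T$ and use Fubini to integrate against $\nu$ first. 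Decomposing $u=v+iw$ as in the computation leading to \eqref{eq43}, the $\nu$-integral becomes a Gaussian integral on $\mathbb R^4$. It depends only on $\lambda$ and on the correlation
\[
\rho=\rho(s,t,s',t')=\frac{(f_t-f_s,f_{t'}-f_{s'})}{\sigma_{s,t}\sigma_{s',t'}}\in[-1,1].
\]
Equivalently, and more cleanly, I would use the chaos kernels $\delta(X_t-X_s)^{(2k)}=\frac{(-1)^k}{\sqrt{2\pi}\,2^k k!\,\sigma^{2k+1}}(f_t-f_s)^{\otimes 2k}$. Then
\[
\sum_n n!\lambda^{2n}|L^{(n)}|^2=\frac1{2\pi}\iint \frac{1}{\sigma\sigma'}\sum_k \frac{(2k)!}{4^k(k!)^2}(\lambda^2\rho)^{2k}=\frac1{2\pi}\iint\frac{1}{\sigma\sigma'\sqrt{1-\lambda^4\rho^2}} .
\]
Differentiating in $\lambda$ and letting $\lambda\nearrow1$, the condition becomes finiteness of
\[
\iint \frac{\rho^2}{\sigma_{s,t}\,\sigma_{s',t'}\,(1-\rho^2)^{3/2}}\,ds\,dt\,ds'\,dt' ,
\]
together with the analogous integral with exponent $1/2$ for $L\in L^2(\mu)$.

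\textbf{Step 2: the main obstacle.} The hard step is this singular four-parameter integral for fBm, uniformly in $H\in(0,1)$. Note that $\sigma\sigma'\sqrt{1-\rho^2}=\sqrt{\det\operatorname{Cov}(B_t-B_s,B_{t'}-B_{s'})}=:\sqrt{\Delta}$. The integrand is then $\rho^2\,\sigma^2\sigma'^2\,\Delta^{-3/2}$, and the weight $\sigma^2\sigma'^2\rho^2$ equals the squared covariance. I would split the domain according to the relative position of the intervals $[s,t]$ and $[s',t']$: disjoint, overlapping, or nested. In each region I would use local nondeterminism of fBm to bound $\Delta$ from below by a constant times products of powers of the gap lengths, as in \cite{HN05, Hu2001}, and bound the covariance from above by the corresponding increment formulas. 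The resulting integrals are of the type studied there, and I expect them to converge precisely by the estimates of Hu--Nualart.

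If these bounds only give convergence on part of the $H$-range, the fallback is the chaos-wise estimate of \cite{HN05}, $n!|L^{(n)}|^2\lesssim n^{-\gamma(H)}$. By Theorem \ref{thsobo} this yields membership in $\mathcal D^{1,2}$ whenever the sum of $n\cdot n^{-\gamma(H)}$ converges. Once convergence is established, Theorem \ref{thsobo} concludes the proof.
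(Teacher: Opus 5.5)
Your Step~1 is correct and follows the paper's own route (Lemma~\ref{lem_multivar}, Lemma~\ref{derivative_silt}, Theorem~\ref{thsobo}). Write $X=B^H_t-B^H_s$, $Y=B^H_{t'}-B^H_{s'}$, $\mu=\operatorname{Cov}(X,Y)$ and $\Delta=\sigma^2\sigma'^2-\mu^2$. Then $\sum_n n\,n!\,|L^{(n)}|^2=\frac{1}{2\pi}\iint \mu^2\Delta^{-3/2}$, and everything hinges on this integral. The gap is Step~2: you never estimate it, you only expect it to converge by Hu--Nualart-type bounds. The paper does not supply this estimate either. Its Example only asserts that the stronger condition \eqref{intcon} holds for all $H$. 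That condition already fails for $H\ge 1/3$: for two short, well-separated intervals of lengths $a,b$ one has $\Delta\approx(ab)^{2H}$, so $\Delta^{-3/2}\approx(ab)^{-3H}$.

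More seriously, Step~2 cannot be completed for all $H\in(0,1)$. Take $(s,t)$ away from the boundary, set $a=t-s$, and let $s'=s+\delta_1$, $t'=t+\delta_2$ with $|\delta|_\infty\le\varepsilon a$. Since $\Delta=\sigma^2\min_c\operatorname{Var}(Y-cX)\le\sigma^2\operatorname{Var}(Y-X)$ and $Y-X=(B^H_{t'}-B^H_t)-(B^H_{s'}-B^H_s)$, you get $\Delta\le 4a^{2H}|\delta|_\infty^{2H}$. At the same time $\mu^2=\sigma^2\sigma'^2-\Delta\ge a^{4H}/2$ for $\varepsilon$ small. Hence
\[
\mu^2\Delta^{-3/2}\ \ge\ \tfrac{1}{16}\,a^{H}\,|\delta|_\infty^{-3H},
\]
which is not integrable over a neighbourhood of $\delta=0$ in $\mathbb{R}^2$ as soon as $H\ge 2/3$. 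Local nondeterminism gives lower bounds on $\Delta$, so it cannot help here; the trivial upper bound already forces divergence. The reduction is an equivalence, and your fallback via chaos-wise bounds controls the very same series $\sum_n n\,n!|L^{(n)}|^2$. So this shows $L\notin\mathcal{D}^{1,2}$ for $H\ge 2/3$, and the statement as formulated for all $H\in(0,1)$ cannot be established by your argument or by any other.

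Your method can at most deliver the range $H<2/3$. Even there you still have to prove convergence, e.g.\ via a lower bound $\Delta\gtrsim a^{2H}|\delta|^{2H}$ near the diagonal together with separate control of the nested and short-interval regions.
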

Indeed, we find the following generalization.

\begin{thm}\label{thm:silt}
Consider the one-dimensional Gaussian process $X = (\langle f_t , \cdot \rangle)_{0 \le t \le T}$, with $f_t\in L^2(\mathbb{R}, dx), 0 \le t \le T$. We use the notation $f(t,s):=f_t-f_s, 0 \le s,t \le T$, and assume that $0 < \|f(t,s)\| \le M$ dtds-a.e on $[0,T]^2$ for some $M<\infty$ . Then the self-intersection local time of $X$
is an element in $\mathcal{D}^{1,2}$ if  
    \begin{equation}\label{intcon}
         \int_0^T \int_0^{t_1} \int_0^T \int_0^{t_2} \bigg(\|f(t_1,s_1)\|^2\|f(t_2,s_2)\|^2 -\big(\langle f(t_1,s_1), f(t_2,s_2) \rangle\big)^2\bigg)^{-\frac{3}{2}} \, ds_2 dt_2 ds_1 dt_1<\infty.
    \end{equation}  
\end{thm}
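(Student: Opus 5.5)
The plan is to apply Theorem \ref{thsobo} with $m=1$ to $L:=\int_0^T\int_0^t \delta(X_t-X_s)\,ds\,dt$. First I check that $L\in L^2(\mu)$, and then that
\[
\Psi(\lambda):=\int_{\mathcal S'_{\mathbb C}}|SL(\lambda u)|^2\,d\nu(u),\qquad 0<\lambda<1,
\]
has a bounded derivative. Write $g_j:=f(t_j,s_j)$ and $\rho:=\langle g_1,g_2\rangle/(\|g_1\|\|g_2\|)$. By \eqref{STDD} with $d=1$,
\[
SL(\xi)=\int_0^T\!\!\int_0^t (2\pi\|f(t,s)\|^2)^{-1/2}\exp\Big(-\frac{\langle \xi,f(t,s)\rangle^2}{2\|f(t,s)\|^2}\Big)\,ds\,dt .
\]
Expanding $|SL(\lambda u)|^2=SL(\lambda u)\overline{SL(\lambda u)}$ as a fourfold integral over $(t_1,s_1,t_2,s_2)$ and using Fubini (justified a posteriori by positivity of the result), I reduce $\Psi$ to a $\nu$-integral of the product $\exp(-\tfrac{\lambda^2}{2}\langle u,\hat g_1\rangle^2)\,\overline{\exp(-\tfrac{\lambda^2}{2}\langle u,\hat g_2\rangle^2)}$, where $\hat g_j=g_j/\|g_j\|$.

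This is a two-dimensional complex Gaussian integral, computed as in \eqref{eq43}. Put $z_j:=\langle u,\hat g_j\rangle$. Under $\nu$ one has $\mathbb E z_j^2=0$, $\mathbb E|z_j|^2=1$ and $\mathbb E z_1\bar z_2=\rho$. Diagonalising the real $4\times 4$ quadratic form, I expect
\[
\int_{\mathcal S'_{\mathbb C}} e^{-\frac{\lambda^2}{2}z_1^2}\,\overline{e^{-\frac{\lambda^2}{2}z_2^2}}\,d\nu(u)=(1-\lambda^4\rho^2)^{-1/2}.
\]
Two sanity checks support this: for $\rho=0$ it gives $1$, and at $\lambda=1$ it gives $(1-\rho^2)^{-1/2}$, which matches $\mathbb E[\delta(X_{t_1}-X_{s_1})\delta(X_{t_2}-X_{s_2})]$ up to the prefactor. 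Hence
\[
\Psi(\lambda)=\frac1{2\pi}\int\!\!\int\!\!\int\!\!\int \frac{(1-\lambda^4\rho^2)^{-1/2}}{\|g_1\|\|g_2\|}\,ds_2\,dt_2\,ds_1\,dt_1 .
\]
Carrying out this determinant computation carefully is the step I expect to be the main obstacle. In particular one must keep the conjugation only on the second factor; naively putting $|\cdot|^2$ inside each factor produces a spurious singularity at $\lambda^2=1/(1+|\rho|)$.

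Next I verify $L\in L^2(\mu)$. The expression for $\Psi(\lambda)$ is increasing in $\lambda$, so by monotone convergence $\Psi(1^-)$ equals the same integral with $\lambda=1$. The key identity is
\[
\frac{(1-\rho^2)^{-3/2}}{\|g_1\|\|g_2\|}=\|g_1\|^2\|g_2\|^2\big(\|g_1\|^2\|g_2\|^2-\langle g_1,g_2\rangle^2\big)^{-3/2}\le M^4\big(\|g_1\|^2\|g_2\|^2-\langle g_1,g_2\rangle^2\big)^{-3/2}.
\]
Since $(1-\rho^2)^{-1/2}\le(1-\rho^2)^{-3/2}$, this bounds the $\lambda=1$ integrand by $M^4$ times the integrand of \eqref{intcon}, so $\Psi(1^-)<\infty$. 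Combined with Theorem \ref{thm:charaG} and Lemma \ref{le1}(i), this gives $L\in L^2(\mu)$ and $\Psi(\lambda)=\sum_n n!\lambda^{2n}|L^{(n)}|^2$. It also rigorously defines the Bochner integral in Definition \ref{defn_SILT}.

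Finally I bound the derivative. Differentiating under the integral sign, which is justified by dominated convergence using the same bound, gives
\[
\partial_\lambda\Psi(\lambda)=\frac1{2\pi}\int\!\!\int\!\!\int\!\!\int\frac{2\lambda^3\rho^2\,(1-\lambda^4\rho^2)^{-3/2}}{\|g_1\|\|g_2\|}\,ds_2\,dt_2\,ds_1\,dt_1 .
\]
For $\lambda\in(0,1)$ the integrand is at most $2(1-\rho^2)^{-3/2}/(\|g_1\|\|g_2\|)$. By the identity above this is at most $2M^4$ times the integrand of \eqref{intcon}. Therefore $\sup_{0<\lambda<1}\partial_\lambda\Psi(\lambda)<\infty$, and Theorem \ref{thsobo} yields $L\in\mathcal D^{1,2}$.
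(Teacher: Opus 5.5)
Your Gaussian computation is correct: it is Lemma~\ref{lem_multivar} with $P=\mathrm{Id}$, and you even get the right factor $\rho^2$ in $\partial_\lambda\Psi$. Your final bounds via $\|g_1\|^2\|g_2\|^2\le M^4$ are the paper's.

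The gap is the sentence ``combined with Theorem~\ref{thm:charaG} and Lemma~\ref{le1}(i), this gives $L\in L^2(\mu)$ and $\Psi(\lambda)=\sum_n n!\lambda^{2n}|L^{(n)}|^2$''. Lemma~\ref{le1} assumes $L\in\mathcal G'$, which you never establish. Theorem~\ref{thm:charaG} for $L^2(\mu)$ requires $\sup_{P\in\mathbb P}\int|SL(Pu)|^2\,d\nu(u)<\infty$, but you only computed the integral without projections. This is not a formality. A priori $L$ is just a Hida distribution and $SL$ is defined only on $\mathcal S_{\mathbb C}$. Your $\Psi$ comes from extending the explicit formula to $\nu$-a.e.\ $u\in\mathcal S'_{\mathbb C}$. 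Identifying it with the chaos series that Theorem~\ref{thsobo} actually uses requires square-integrable kernels, i.e.\ $L\in\mathcal G'$; the paper's remark after the proof stresses exactly this. Nor can you handle $\sup_P$ at $\lambda=1$ by pointwise comparison. With projections the correlation becomes $\sigma_P=\langle P\hat g_1,P\hat g_2\rangle$, and $|\sigma_P|$ can come arbitrarily close to $(1+|\rho|)/2$, which exceeds $|\rho|$ whenever $|\rho|<1$.

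The repair is the paper's first step: redo your computation with $Pu$ in place of $u$. Because $\mathbb E z_j^2=0$, the Gaussian integral depends only on the cross covariance $\mathbb E z_1\bar z_2=\sigma_P\in[-1,1]$, not on $\|P\hat g_j\|\le 1$. It equals $(1-\lambda^4\sigma_P^2)^{-1/2}$. At a fixed $\lambda_0<1$ (the paper takes $\lambda_0=\tfrac12$) this is at most $(1-\lambda_0^4)^{-1/2}$, uniformly in $P$. Moreover $\int\!\!\int\!\!\int\!\!\int(\|g_1\|\|g_2\|)^{-1}<\infty$, since $(\|g_1\|\|g_2\|)^{-1}\le M^2\big(\|g_1\|^2\|g_2\|^2-\langle g_1,g_2\rangle^2\big)^{-3/2}$. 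Hence $L\in\mathcal G'$ by Theorem~\ref{thm:charaG}. Only then does Lemma~\ref{le1}(i) remove the projections for every $\lambda$, and the rest of your argument goes through.

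Alternatively, define the Bochner integral in $\mathcal G_{-s}$, $s>0$, using $\|\delta(\langle g,\cdot\rangle)\|_{-s}=C_s/\|g\|$ from \eqref{eq43} and Lemma~\ref{le1}(ii). This is also what actually justifies your claim that the Bochner integral is well defined; your $\lambda=1$ estimate does not.

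Finally, Fubini for the complex-valued integrand is not justified ``a posteriori by positivity of the result''. Instead, for $\lambda<1$ bound the absolute integrand by $(1-\lambda^4)^{-1/2}/(2\pi\|g_1\|\|g_2\|)$, using Cauchy--Schwarz together with \eqref{eq43}.
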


\begin{proof}
The proof follows in three steps of regularization. We first show that the self-intersection local time of $X$ is an element in $\mathcal{G}^{\prime}$ to eliminate the supremum over all finite dimensional projections via Lemma \ref{le1}. Then we show square integrability and finally Malliavin regularity.

We have for all $0 < \lambda < 1$, $P \in \mathbb{P}$: 
\begin{eqnarray}\label{eq_SILT}  &&\int_{\mathcal{S}^{\prime}_{\mathbb{C}}}\bigg|S\left(\int_0^T \int_0^t \delta(X_t-X_s) \, ds\, dt\right)(\lambda Pu)\,\bigg|^2 d\nu(u) \\
 &=&\int_0^T\! \!\int_0^{t_1} \! \! \int_0^T\! \!\int_0^{t_2} \! \!
 \int_{\mathcal{S}^{\prime}_{\mathbb{C}}} S(\delta(X_{t_1}-X_{s_1}))(\lambda Pu) \,\overline{S(\delta(X_{t_2}-X_{s_2}))(\lambda Pu)} \, d\nu(u) ds_2 dt_2 ds_1 dt_1. \nonumber
\end{eqnarray}
For the integrand in Equation \eqref{eq_SILT} we use the notation $f(t,s) = f_t-f_s$ as in the formulation of Theorem \ref{thm:silt} and by Lemma \ref{lem_multivar} we obtain: 
\begin{eqnarray}\label{eq:inter}
  &&  \int_{\mathcal{S}^{\prime}_{\mathbb{C}}} S(\delta(\langle f(t_1,s_1), \cdot\rangle))(\lambda Pu) \overline{S(\delta(\langle f(t_2,s_2), \cdot\rangle))(\lambda Pu)} d\nu(u) \nonumber \\
&&\quad= \frac{1}{2\pi \|f(t_1,s_1)\| \|f(t_2,s_2)\|} \frac{1}{\sqrt{1-\sigma_P^2\lambda^4} }, 
\end{eqnarray}
with $$\sigma_P= \frac{\langle Pf(t_1,s_1), Pf(t_2,s_2) \rangle}{\|f(t_1,s_1)\| \|f(t_2,s_2)\|}\in [-1,1] .$$

Therefore we obtain in the case $\lambda = \frac{1}{2}$:
\begin{eqnarray*}
 && \int_{\mathcal{S}^{\prime}_{\mathbb{C}}}\bigg|S\left(\int_0^T \int_0^t \delta(X_t-X_s) \, ds\, dt\right)\bigg(\tfrac{1}{2}Pu\bigg)\,\bigg|^2 d\nu(u) \\
&& = \int_0^T\! \!\int_0^{t_1} \! \!\int_0^T\! \!\int_0^{t_2}  \frac{1}{2\pi \|f(t_1,s_1)\| \|f(t_2,s_2)\|} \frac{1}{\sqrt{1-\frac{1}{16}\sigma_P^2} } \, ds_2 dt_2 ds_1 dt_1 \\
  && = \int_0^T\! \!\int_0^{t_1}\! \! \int_0^T\! \!\int_0^{t_2} \frac{ds_2 dt_2 ds_1 dt_1}{2\pi \sqrt{\big(\|f(t_1,s_1)\|^2\|f(t_2,s_2)\|^2 - \frac{1}{16}\big(\langle Pf(t_1,s_1), Pf(t_2,s_2) \rangle\big)^2}} \\
&& \le \int_0^T\! \!\int_0^{t_1} \! \!\int_0^T\! \!\int_0^{t_2} \frac{1}{ \|f(t_1,s_1)\| \|f(t_2,s_2)\|} \, ds_2 dt_2 ds_1 dt_1.
\end{eqnarray*}
Since the latter integral is independent of $P \in \mathbb{P}$ and finite by assumption, we obtain that the self-intersection local time is in $\mathcal{G}^{\prime}$ by Theorem \ref{thm:charaG}.

Now, in the limit $\lambda \nearrow 1$ we obtain from Equation \eqref{eq:inter}
\begin{eqnarray*}
 && \int_{\mathcal{S}^{\prime}_{\mathbb{C}}}\bigg|S\left(\int_0^T \int_0^t \delta(X_t-X_s) \, ds\, dt\right)(u)\,\bigg|^2 d\nu(u) \\
  && = \int_0^T\! \!\int_0^{t_1}\! \! \int_0^T\! \!\int_0^{t_2} \! \!\frac{ds_2 dt_2 ds_1 dt_1}{2\pi \sqrt{\big(\|f(t_1,s_1)\|^2\|f(t_2,s_2)\|^2 - \big(\langle f(t_1,s_1), f(t_2,s_2) \rangle\big)^2}} < \infty
\end{eqnarray*}
by the integrability Condition \eqref{intcon}. Hence, we obtain square-integrability of the self-intersection local time by Theorem \ref{thm:charaG} together with Lemma \ref{le1}.

Now let us consider the function
\begin{eqnarray}
    (0,1) \ni \lambda \mapsto I(\lambda) := \frac{1}{\sqrt{\|f(t_1,s_1)\|^2 \|f(t_2,s_2)\|^2-\langle f(t_1,s_1), f(t_2,s_2) \rangle^2\lambda^4} }\label{eqnsigmageneral}
\end{eqnarray}
obtained from Equation \eqref{eq:inter}. Deriving the function in Equation \eqref{eqnsigmageneral} we obtain:
$$
I'(\lambda) = \frac{ 1}{\|f(t_1,s_1)\| \|f(t_2,s_2)\|} \frac{d}{d\lambda}\bigg( \frac{1}{\sqrt{1-\sigma_{Id}^2\lambda^4} }\bigg). 
$$
By Lemma \ref{derivative_silt} we have
$$ 
\frac{d}{d\lambda}\bigg( \frac{1}{\sqrt{1-\sigma_{Id}^2\lambda^4} }\bigg) =\frac{2\sigma_{Id} \lambda^3}{(1-\sigma_{Id}^2 \lambda^4)^{\frac{3}{2}}}.
$$
Hence, by Theorem \ref{thsobo} we have
$$
\int_0^T \int_0^t \delta(X_t-X_s) \, ds\, dt \in \mathcal{D}^{1,2}
$$
if and only if 
\begin{eqnarray}\label{eqSILT}
    \sup_{0<\lambda<1}\int_0^T \int_0^{t_1} \int_0^T \int_0^{t_2}  \frac{1}{2\pi \|f(t_1,s_1)\| \|f(t_2,s_2)\|}\frac{2\sigma_{Id}^2 \lambda^3}{(1-\sigma_{Id}^2 \lambda^4)^{\frac{3}{2}}} \, ds_2 dt_2 ds_1 dt_1  <\infty.
\end{eqnarray}
The integrand in \eqref{eqSILT} can be rewritten as
\begin{eqnarray}\label{eqSILT2}
   && \frac{ 1}{2\pi \|f(t_1,s_1)\| \|f(t_2,s_2)\|}\frac{2\sigma_{Id}^2 \lambda^3}{(1-\sigma _{Id}^2 \lambda^4)^{\frac{3}{2}}} = \frac{\lambda^3}{\pi} \frac{ \sigma_{Id}^2\|f(t_1,s_1)\|^2 \|f(t_2,s_2)\|^2}{\|f(t_1,s_1)\|^3 \|f(t_2,s_2)\|^3(1-\sigma_{Id}^2 \lambda^4)^{\frac{3}{2}}} \notag\\
    &=&  \frac{\lambda^3}{\pi} \frac{\langle f(t_1,s_1), f(t_2,s_2) \rangle^2}{\bigg(\|f(t_1,s_1)\|^2\|f(t_2,s_2)\|^2 -\big(\langle f(t_1,s_1), f(t_2,s_2) \rangle\big)^2\lambda^4\bigg)^{\frac{3}{2}}}.
\end{eqnarray}
Thus, the integral in \eqref{eqSILT} can be estimated by
\begin{eqnarray*}
    &&\sup_{0<\lambda<1}\int_0^T \! \!\int_0^{t_1} \! \!\int_0^T \! \!\int_0^{t_2} \frac{ 1}{2\pi \|f(t_1,s_1)\| \|f(t_2,s_2)\|}\frac{2\sigma_{Id}^2 \lambda^3}{(1-\sigma_{Id}^2 \lambda^4)^{\frac{3}{2}}} \, ds_2 dt_2 ds_1 dt_1\\
    && \le \int_0^T \! \!\int_0^{t_1} \! \!\int_0^T \! \!\int_0^{t_2} \frac{\langle f(t_1,s_1), f(t_2,s_2) \rangle^2 \, ds_2 dt_2 ds_1 dt_1}{\pi \bigg(\|f(t_1,s_1)\|^2\|f(t_2,s_2)\|^2 -\big(\langle f(t_1,s_1), f(t_2,s_2) \rangle\big)^2\bigg)^{\frac{3}{2}}} < \infty
\end{eqnarray*}
by the assumed integrability Condition \eqref{intcon}. Hence, the claim follows by Theorem \ref{thsobo}.
\end{proof}

\begin{rmk}
Note that the proof of Theorem~\ref{thm:silt} makes essential use of Lemma~\ref{le1} in a very specific way. In particular, it is crucial to first identify the object of interest as a regular generalized function in~$\mathcal{G}^{\prime}$. Once this identification has been established, the subsequent arguments can be carried out without taking a supremum over finite-dimensional projections. This point is important: if the finite-dimensional projections were to appear inside the inner product, ensuring square integrability would become a highly nontrivial issue. By showing that the object belongs to~$\mathcal{G}^{\prime}$, we guarantee both that the fundamental expressions required in later steps are well defined and that the assumptions of Lemma~\ref{le1} are satisfied. Consequently, this step is not a detour but a necessary and structural intermediate step in the argument.
\end{rmk}
\begin{ex}
For fractional Brownian motion with Hurst parameter $0<H<1$ we have:
\begin{eqnarray*}\|f(t,s)\|^2&=&\mathbb{E}((B_t^H-B_s^H)^2) \\
&=& \mathbb{E}(((B_t^H)^2 -2 B_t^H B_s^H + (B_s^H)^2)\\
&=& t^{2H} - t^{2H} -s^{2H} +(t-s)^{2H} + s^{2H} =(t-s)^{2H}
\end{eqnarray*}
and 
\begin{eqnarray*}
\langle f(t_1,s_1), f(t_2,s_2) \rangle &=& \mathbb{E}((B_{t_1}^H-B_{s_1}^H)(B_{t_2}^H-B_{s_2}^H)) \\
&=& \frac{1}{2}\bigg(t_1^{2H} +t_2^{2H} -(t_1-t_2)^{2H} - t_1^{2H} -s_2^{2H} +(t_1-s_2)^{2H}\\
&&
- s_1^{2H} -t_2^{2H} +(s_1-t_2)^{2H}+s_1^{2H} +s_2^{2H} -(s_1-s_2)^{2H}\bigg)\\ 
&=& \frac{1}{2}\bigg((t_1-s_2)^{2H}  +(s_1-t_2)^{2H} -(t_1-t_2)^{2H} -(s_1-s_2)^{2H}\bigg).
\end{eqnarray*}
Hence, the criterion
$$\int_0^T \int_0^{t_1} \int_0^T \int_0^{t_2} \frac{1}{\sqrt{\big(\|f(t_1,s_1)\|^2\|f(t_2,s_2)\|^2 -\big(\langle f(t_1,s_1), f(t_2,s_2) \rangle\big)^2}} \, ds_2 dt_2 ds_1 dt_1 < \infty$$
is exactly the criterion for square-integrability of the self-intersection local as provided in \cite{HN05}.

In \cite{Xie} the criterion for the Malliavin smoothness is shown for arbitrary Gaussian processes with stationary increments. 

Note that we do not need an assumption on stationary increments in Theorem \ref{thm:silt} but just need that $0<\|f(t,s)\|\leq M<\infty$ $dtds$-a.e. The class of Gaussian processes we can treat is thus larger. In this sense our result is a generalization of \cite{Xie}. For fractional Brownian motion, since it has stationary increments, both criteria coincide.

Indeed $$\int_0^T \int_0^{t_1} \int_0^T \int_0^{t_2}  \frac{1}{\bigg(\|f(t_1,s_1)\|^2\|f(t_2,s_2)\|^2 -\big(\langle f(t_1,s_1), f(t_2,s_2) \rangle\big)^2\bigg)^{\frac{3}{2}}} \, ds_2 dt_2 ds_1 dt_1 <\infty$$ is fulfilled for the fractional Brownian motion for all $0<H<1$. Hence the self-intersection local time is Malliavin differentiable.
\end{ex}

Finally we study a class of useful densities from Gaussian analysis, the so called Gauss kernels. 
\begin{defn}
    Denote by $L(L^2(\mathbb{R},dx))$ the set of all $A:L^2(\mathbb{R},dx) \to L^2(\mathbb{R},dx)$ being linear and continuous. Then we define the set of Gauss kernels 
    $$GK:=\left\{\Phi_A \in (\mathcal{S})^\prime : S(\Phi_A)(\xi) = \exp\left(-\frac12 \langle \xi, (Id-A) \xi \rangle\right), \, \xi \in \mathcal{S}_{\mathbb{C}}, \, A \in L(L^2(\mathbb{R},dx)) \right\}.$$
    Here and below we denote the complexification of $A$ again by $A$. Note that $S(\Phi_A), \Phi_A \in GK,$ is a $\mathcal{U}$ functional.
\end{defn}

\begin{rmk}
\begin{itemize}
    \item[(i)] The function $S\Phi_A$ is indeed a $\mathcal{U}$ functional since $\Phi_A \in (\mathcal{S})^{\prime}$. Generalizations of the class of Gauss kernels can for example be used to construct Feynman path integrals. Thus, they are an intensively studied class of functions, see e.g.~\cite{HKPS93, GS96, BG10}.
    \item[(ii)] For nice enough $A$, i.e., if $\det(A)>0$ exists and $A$ has an inverse on a suitable subspace of $L^2(\mathbb{R},dx)$, one has explicitly $$\Phi_A= \frac{\exp(-\frac12 \langle \cdot, (A^{-1}-Id) \cdot \rangle)}{\sqrt{\det(A)}}.$$
\end{itemize}
\end{rmk}

Under which conditions on $A$ do we have a Gauss kernel $\Phi_A \in \mathcal{D}^{m,2}$ and for which $m \in \mathbb{N}$?

\begin{thm}
Let $A \in L(L^2(\mathbb{R},dx))$ diagonalize along an orthonormal basis $\{e_n\}_{n \in \mathbb{N}}$ with eigenvalues in the open interval $(0,2)$.
\begin{enumerate}
    \item[(i)]  If $0<\det\!\bigl(2A-A^2\bigr)<\infty,$ then $\Phi_A\in L^2(\mu)$.
    \item[(ii)] If $\sup_{0 < \lambda < 1}\frac{\partial^{\alpha}}{\partial \lambda^{\alpha}}\det\!\bigl(Id - (\lambda^2(Id-A))^2\bigr)^{-\frac{1}{2}}< \infty,$ then $\Phi_A\in \mathcal{D}^{\alpha,2}$, where $\alpha >0$ and we define $\frac{\partial^{\alpha}}{\partial \lambda^{\alpha}} := D_{0+}^{\alpha-n} \frac{\partial^{n}}{\partial \lambda^{n}} $ for $\alpha >n \in \mathbb{N}$.
\end{enumerate}
\end{thm}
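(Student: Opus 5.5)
Set $B:=Id-A$. Since $A$ is diagonal in $\{e_n\}$ with eigenvalues $a_n\in(0,2)$, $B$ is diagonal with eigenvalues $b_n:=1-a_n\in(-1,1)$. The plan is to compute $\lambda\mapsto\int_{\mathcal S'_{\mathbb C}}|S\Phi_A(\lambda Pu)|^2\,d\nu(u)$ explicitly and then apply Theorem~\ref{thm:charaG}, Lemma~\ref{le1}, Theorem~\ref{thsobo} and Theorem~\ref{thm:refined_Malliavin}.

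\textbf{Computation of the Gaussian integral.} Fix a projection $P\in\mathbb P$, taken first onto $\mathrm{span}\{e_1,\dots,e_k\}$. Write $\langle e_j,u\rangle=x_j+iy_j$, where $x_j,y_j$ are i.i.d.\ $N(0,1/2)$ under $\nu$. Then
\[
|S\Phi_A(\lambda Pu)|^2=\prod_{j\le k}\exp\bigl(-\lambda^2 b_j\,\Re (x_j+iy_j)^2\bigr)=\prod_{j\le k}\exp\bigl(-\lambda^2 b_j(x_j^2-y_j^2)\bigr).
\]
Each factor is a two-dimensional Gaussian integral of exactly the type in \eqref{eq43}, giving $(1+\lambda^2b_j)^{-1/2}(1-\lambda^2b_j)^{-1/2}=(1-\lambda^4b_j^2)^{-1/2}$. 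This is finite because $|\lambda^2 b_j|<1$ for all $0<\lambda\le1$. Hence
\[
\int|S\Phi_A(\lambda Pu)|^2d\nu(u)=\prod_{j\le k}(1-\lambda^4b_j^2)^{-1/2}\nearrow \det\bigl(Id-(\lambda^2 B)^2\bigr)^{-1/2},\qquad k\to\infty.
\]
For a general $P$ (not adapted to the basis) I would handle the supremum by noting that $P B P$ is a compression of $B$. The quadratic form $\Re\langle Pu,BPu\rangle$ then gives $\det(Id-\lambda^4(PBP)^2)^{-1/2}$, and by eigenvalue interlacing of compressions this is bounded by the full determinant. Alternatively, one may first deduce $\Phi_A\in\mathcal G'$ from the adapted projections at $\lambda=1/2$ (where the product converges once $\sum b_j^2<\infty$) and afterwards use Lemma~\ref{le1} to drop the supremum. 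I expect this reduction over non-adapted $P$ to be the main technical obstacle, and I would resolve it via the interlacing/compression bound.

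\textbf{Part (i).} At $\lambda=1$ we have $1-b_j^2=2a_j-a_j^2$, so the limit above equals $\det(2A-A^2)^{-1/2}$. This is finite precisely when $0<\det(2A-A^2)$. Theorem~\ref{thm:charaG} then gives $\Phi_A\in L^2(\mu)$. (Finiteness of the determinant is automatic, since each factor is $\le 1$.)

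\textbf{Part (ii).} By part (i), or more generally since the hypothesis forces finiteness near $\lambda\to1$, we have $\Phi_A\in\mathcal G'$. Lemma~\ref{le1}(i) then identifies $\int|S\Phi_A(\lambda u)|^2d\nu(u)=\det(Id-\lambda^4B^2)^{-1/2}$ for $0<\lambda<1$. We also need $\Phi_A\in L^2(\mu)$ to invoke Theorems~\ref{thsobo}/\ref{thm:refined_Malliavin}. This follows from monotonicity: the series $\sum n!\lambda^{2n}|\Phi_A^{(n)}|^2$ has bounded derivative on $(0,1)$, so it is bounded as $\lambda\nearrow1$. For integer $\alpha=m$ apply Theorem~\ref{thsobo} directly. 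For $\alpha=n+\beta$ with $\beta\in(0,1)$, the definition $\partial^\alpha_\lambda=D^{\beta}_{0+}\partial^n_\lambda$ matches Theorem~\ref{thm:refined_Malliavin} verbatim, which yields $\Phi_A\in\mathcal D^{\alpha,2}$.
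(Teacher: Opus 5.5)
Your proof is correct and shares the paper's core computation. Both use the mode-by-mode Gaussian integral giving $\prod_{j\le k}(1-\lambda^4 b_j^2)^{-1/2}$ along projections onto $\mathrm{span}\{e_1,\dots,e_k\}$, followed by the characterization theorems. The genuine difference is how you pass from projected to unprojected integrals.

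The paper uses only the adapted $P_m$ and lets $m\to\infty$. It asserts uniform integrability of $|S\Phi_A(\lambda P_m\cdot)|^2$ to identify $\int|S\Phi_A(\lambda u)|^2\,d\nu(u)$ with the determinant. This is justifiable by a uniform $L^p(\nu)$ bound for some $p>1$, since $\sup_j|b_j|<1$ and $\sum_j b_j^2<\infty$. However, the paper never controls the supremum over arbitrary $P\in\mathbb P$ required in Theorem~\ref{thm:charaG}. Nor does it establish $\Phi_A\in\mathcal G'$ before working with the unprojected integral.

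Your compression bound does both jobs. With $C$ the compression of $B$ to $\mathrm{ran}\,P$, you get $\int|S\Phi_A(\lambda Pu)|^2\,d\nu(u)=\det(Id-\lambda^4C^2)^{-1/2}\le\det(Id-\lambda^4B^2)^{-1/2}$. The inequality follows from min--max applied separately to the positive and negative eigenvalues, since $1-\lambda^4t^2$ is even and decreasing in $|t|$. This verifies Theorem~\ref{thm:charaG} exactly as stated and gives $\Phi_A\in\mathcal G'$ (and $\Phi_A\in L^2(\mu)$ in (i)) directly. Together with the adapted projections, it yields the identification via Lemma~\ref{le1} with no uniform-integrability argument. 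The paper's route avoids spectral inequalities but leaves these structural steps implicit.

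A few smaller points:
\begin{itemize}
\item In (ii) you cannot invoke part (i), whose hypothesis is not assumed. Your second justification is the right one: the hypothesis presupposes $\det(Id-\lambda^4B^2)^{-1/2}<\infty$ on $(0,1)$, i.e.\ $\sum_j b_j^2<\infty$.
\item Your fallback of obtaining $\mathcal G'$ from adapted projections alone does not meet Theorem~\ref{thm:charaG}, which takes the supremum over all of $\mathbb P$, unless you argue through the chaos kernels.
\item Like the paper, you tacitly need $e_j\in\mathcal S$ for the adapted projections to lie in $\mathbb P$.
\item It matters that you only use the sufficiency direction of Theorem~\ref{thm:refined_Malliavin}. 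For $\alpha\in(0,1)$ the hypothesis of (ii) can never hold. The constant term $1$ of $\det(Id-\lambda^4B^2)^{-1/2}$ contributes $\lambda^{-\alpha}/\Gamma(1-\alpha)$ to $D^\alpha_{0+}$, which is unbounded as $\lambda\downarrow 0$. So that case is vacuous rather than a problem for your argument.
\end{itemize}
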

\begin{proof}
Let $\xi \in \mathcal{S}_{\mathbb{C}}$ and $0 < \lambda < 1$. Denote by $P_m$ the orthogonal projection generated by $\{e_n\}_{1 \le n \le m}, m \in \mathbb{N}$. Then

\begin{eqnarray}\label{GKconv}
\langle P_m\xi, (Id-A) P_m \xi \rangle= \sum_{n=1}^m \langle \xi, e_n\rangle^2 \kappa_n,
\end{eqnarray}
where $(\kappa_n)_{n \in \mathbb{N}}$ is the sequence of eigenvalues of $Id-A$ corresponding to the basis $\{e_n\}_{n \in \mathbb{N}}$. From \eqref{GKconv} togehter with Fubini's theorem we can conclude
\begin{eqnarray*}
  & & \|S(\Phi_A)(\lambda P_m \cdot)\|^2_{L^2(\nu)} \\ &=& \int_{\mathcal{S}_{\mathbb{C}}^{\prime}} \exp\left(-\frac12 \lambda^2\langle P_m u, (Id-A) P_m u \rangle\right)\exp\left(-\frac12 \lambda^2\langle P_m \overline{u}, (Id-A) P_m \overline{u} \rangle\right) \, d \nu (u)\\
  &=& \int_{\mathcal{S}_{\mathbb{C}}^{\prime}} \prod_{n=1}^m \exp\left(-\frac12 \lambda^2\langle v+iw, e_n\rangle^2 \kappa_n\right) \exp\left(-\lambda^2\frac12 \langle v-iw, e_n\rangle^2 \kappa_n\right) \, d\nu(v,w)\\
  &=& \int_{\mathcal{S}_{\mathbb{C}}^{\prime}} \prod_{n=1}^m \exp\left(-\lambda^2 (\langle v, e_n\rangle^2 - \langle w, e_n\rangle^2 ) \kappa_n \right) \, d\nu(v,w)\\  
  &=& \prod_{n=1}^m \frac{1}{\pi}\int_{\mathbb{R}^2} \exp\left(-\lambda^2 (x^2-y^2) \kappa_n \right) \exp\left(-x^2 -y^2\right) \,dxdy = \prod_{n=1}^m I_n,
\end{eqnarray*}
where
\[
I_n := \frac{1}{\pi}
\int_{\mathbb{R}^2}
\exp\!\bigl(-(1+\lambda^2\kappa_n)x^2 - (1-\lambda^2\kappa_n)y^2\bigr)\, dxdy, \quad n \in \mathbb{N}.
\]
By our assumptions is $(1\pm \lambda^2\kappa_n)>0$, hence
\[
\int_{\mathbb{R}} \exp(-(1+\lambda^2\kappa_n)x^2) \, dx
= \sqrt{\frac{\pi}{1+\lambda^2\kappa_n}}, 
\qquad
\int_{\mathbb{R}} \exp(-(1-\lambda^2\kappa_n)y^2) \, dy 
= \sqrt{\frac{\pi}{1-\lambda^2\kappa_n}} .
\]
Thus
\[
I_n 
= \frac{1}{\pi} \int_{\mathbb{R}^2}
\exp\!\bigl(-(1+\lambda^2\kappa_n)x^2 - (1-\lambda^2\kappa_n)y^2\bigr)\, dx dy
= \frac{1}{\sqrt{1-(\lambda^2\kappa_n)^2}}
\]
and
\begin{eqnarray*}
\det\!\bigl(Id - (\lambda^2(Id-A))^2\bigr)^{-1/2}
= \lim_{m \to \infty}\prod_{n=1}^m \frac{1}{\sqrt{1-(\lambda^2\kappa_n)^2}} 
= \lim_{m \to \infty} \|S(\Phi_A)(\lambda P_m \cdot)\|^2_{L^2(\nu)}.
\end{eqnarray*}
Since
$\{|S(\Phi_A)(\lambda P_m \cdot)|^2\}_{m \in \mathbb{N}}$
is uniformly integrable, together with the existence of
\begin{eqnarray*}
\lim_{m \to \infty} \prod_{n=1}^m \exp\left(\pm \lambda^2 (\langle v, e_n\rangle)^2 \kappa_n \right) \in [0, \infty], \quad v \in \mathcal{S}^{\prime},
\end{eqnarray*}
due to the assumptions made, it follows
$$\|S(\Phi_A)(\cdot)\|^2_{L^2(\nu)} = \lim_{m \to \infty} \|S(\Phi_A)(\lambda P_m \cdot)\|^2_{L^2(\nu)} = \det\!\bigl(Id - (\lambda^2(Id-A))^2\bigr)^{-1/2}.$$
Now using the characterization given in Theorem \ref{thm:refined_Malliavin}, we obtain the assertion. \end{proof}

\bibliographystyle{alpha}

\appendix
\renewcommand{\thesection}{A}
\section*{\appendixname \ Technical  Lemmata}
\begin{defn}
    The left sided Riemann--Liouville fractional integral and derivative, respectively, of order $\alpha \in (0,1)$ of an absolutely continuous function $f: [0, \infty) \to \mathbb{R}$ are defined by:
    \begin{enumerate}
        \item[(i)] $$
    I_{0+}^{\alpha}f(x) := \frac{1}{\Gamma(\alpha)}\int_{0}^x \frac{f(t)}{(x-t)^{1-\alpha}} \, dt, \qquad x \ge 0,
    $$         
        \item[(ii)] $$
    D_{0+}^{\alpha}f(x) := \frac{1}{\Gamma(1-\alpha)}\frac{d}{dx}\int_{0}^x \frac{f(t)}{(x-t)^{\alpha}} \, dt, \qquad x \ge 0.
    $$     
    \end{enumerate}
\end{defn}

\begin{lem}\label{lefrac}
For $n\in \mathbb{N}$ we  have: 
    \begin{enumerate}
        \item[(i)] $$I_{0+}^{\alpha}x^n = \frac{\Gamma(n+1)}{\Gamma(n+1+\alpha)}\, x^{n+\alpha}, \qquad x \ge 0,$$
        \item[(ii)] $$D_{0+}^{\alpha}x^n = \frac{\Gamma(n+1)}{\Gamma(n+1-\alpha)}\, x^{n-\alpha}, \qquad x \ge 0.$$
    \end{enumerate}

\end{lem}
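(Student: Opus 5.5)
Both identities reduce to Beta integrals. For (i) I would insert $f(t)=t^n$ into the definition of $I^\alpha_{0+}$ and substitute $t=xs$, $s\in[0,1]$, for fixed $x>0$; the case $x=0$ is trivial, both sides being $0$. This gives
\[
I_{0+}^{\alpha}x^n=\frac{x^{n+\alpha}}{\Gamma(\alpha)}\int_0^1 s^{n}(1-s)^{\alpha-1}\,ds=\frac{x^{n+\alpha}}{\Gamma(\alpha)}\,B(n+1,\alpha).
\]
The integral converges, since $\alpha>0$ and $n\ge 0$. The identity $B(a,b)=\Gamma(a)\Gamma(b)/\Gamma(a+b)$ then yields $\frac{\Gamma(n+1)}{\Gamma(n+1+\alpha)}x^{n+\alpha}$.

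For (ii) I would not redo the computation. I would write $\frac{1}{\Gamma(1-\alpha)}\int_0^x t^n(x-t)^{-\alpha}\,dt=I^{1-\alpha}_{0+}x^n$, which is legitimate since $1-\alpha\in(0,1)$. By (i) this equals $\frac{\Gamma(n+1)}{\Gamma(n+2-\alpha)}x^{n+1-\alpha}$.

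Differentiating in $x$ gives the factor $(n+1-\alpha)\,x^{n-\alpha}$. The functional equation $\Gamma(n+2-\alpha)=(n+1-\alpha)\Gamma(n+1-\alpha)$ then cancels this factor against the denominator, leaving $\frac{\Gamma(n+1)}{\Gamma(n+1-\alpha)}x^{n-\alpha}$.

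At $x=0$ the statement should be read for $x>0$, or as a one-sided limit, since $x^{n-\alpha}$ is singular there when $n=0$.

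There is no genuine obstacle. The only points needing care are the convergence of the Beta integral at both endpoints and the derivative at $x=0$. Note also that the main body uses the lemma with non-integer exponents $2n$, and implicitly for the monomials in $\lambda$. The same computation works verbatim for any real exponent $\beta>-1$ in place of $n$, so I would state and prove the lemma in that generality.
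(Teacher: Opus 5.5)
Your proof is correct and is essentially the paper's: the substitution $t=xu$ reduces both parts to Beta integrals, and (ii) is finished by differentiating $x^{n+1-\alpha}$ and using $\Gamma(n+2-\alpha)=(n+1-\alpha)\Gamma(n+1-\alpha)$. The only difference is that you obtain the integral in (ii) by applying (i) with order $1-\alpha$ instead of repeating the substitution, which is a harmless shortcut. Your caveat at $x=0$ for $n=0$ is a correct refinement that the paper omits. Your remark about ``non-integer exponents $2n$'' is off, however, since $2n$ (and $2n-m$) are integers and the lemma as stated already covers the uses in the main text; your extension to real exponents $\beta>-1$ is nonetheless valid.
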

\begin{proof}(i):
We want to compute
\[
I(x) = \int_{0}^x \frac{t^n}{(x-t)^{1-\alpha}} \, dt
= \int_0^x t^n (x-t)^{\alpha-1}\,dt.
\]

We use the substitution
\[
t = xu, \qquad dt = x\,du, \qquad u \in [0,1].
\]
Then
\[
t^n = (xu)^n = x^n u^n,
\]
and
\[
x - t = x - xu = x(1-u)
\quad\Rightarrow\quad
(x-t)^{\alpha-1} = \bigl(x(1-u)\bigr)^{\alpha-1}
= x^{\alpha-1}(1-u)^{\alpha-1}.
\]

Thus the integral becomes
\begin{align*}
I(x)
&= \int_0^x t^n (x-t)^{\alpha-1}\,dt \\
&= \int_0^1 \bigl(x^n u^n\bigr) \bigl(x^{\alpha-1}(1-u)^{\alpha-1}\bigr) x \,du \\
&= x^{n+\alpha} \int_0^1 u^n (1-u)^{\alpha-1}\,du.
\end{align*}

We recognize the remaining integral as the Beta function:
\[
B(p,q) = \int_0^1 u^{p-1}(1-u)^{q-1}\,du.
\]
With \(p = n+1\) and \(q = \alpha\), we obtain
\[
\int_0^1 u^n (1-u)^{\alpha-1}\,du
= B(n+1,\alpha).
\]

Hence
\[
I(x) = x^{n+\alpha} B(n+1,\alpha).
\]

Using the identity
\[
B(p,q) = \frac{\Gamma(p)\Gamma(q)}{\Gamma(p+q)},
\]
we get
\[
B(n+1,\alpha)
= \frac{\Gamma(n+1)\Gamma(\alpha)}{\Gamma(n+1+\alpha)}.
\]

\noindent Therefore, the final result is
\[
\frac{1}{\Gamma(\alpha)}\int_{0}^x \frac{t^n}{(x-t)^{1-\alpha}} \, dt
= x^{n+\alpha} \frac{\Gamma(n+1)}{\Gamma(n+1+\alpha)}.
\]

(ii): Next we compute
\[
\frac{1}{\Gamma(1-\alpha)}\frac{d}{dx}\int_{0}^x \frac{t^n}{(x-t)^{\alpha}} \, dt
= \frac{1}{\Gamma(1-\alpha)}\frac{d}{dx}\int_{0}^x t^n (x-t)^{-\alpha} \, dt.
\]

First, compute the integral
\[
R(x) := \int_{0}^x t^n (x-t)^{-\alpha} \, dt.
\]

We use the substitution
\[
t = xu, \qquad dt = x\,du, \qquad u \in [0,1].
\]
Then
\[
t^n = (xu)^n = x^n u^n,
\]
and
\[
x - t = x - xu = x(1-u)
\quad\text{ plugging in yields }\quad
(x-t)^{-\alpha} = \bigl(x(1-u)\bigr)^{-\alpha}
= x^{-\alpha}(1-u)^{-\alpha}.
\]

Hence
\begin{align*}
R(x)
&= \int_{0}^x t^n (x-t)^{-\alpha}\,dt \\
&= \int_0^1 \bigl(x^n u^n\bigr)\bigl(x^{-\alpha}(1-u)^{-\alpha}\bigr)x\,du \\
&= x^{n+1-\alpha}\int_0^1 u^n (1-u)^{-\alpha}\,du.
\end{align*}

We recognize the remaining integral as a Beta function:
\[
B(p,q) = \int_0^1 u^{p-1}(1-u)^{q-1}\,du.
\]
With \(p = n+1\) and \(q = 1-\alpha\), we obtain
\[
\int_0^1 u^n (1-u)^{-\alpha}\,du = B(n+1,1-\alpha).
\]

Thus
\[
R(x) = x^{n+1-\alpha} B(n+1,1-\alpha).
\]

Using
\[
B(p,q) = \frac{\Gamma(p)\Gamma(q)}{\Gamma(p+q)},
\]
we get
\[
B(n+1,1-\alpha)
= \frac{\Gamma(n+1)\Gamma(1-\alpha)}{\Gamma(n+2-\alpha)}.
\]

Hence
\[
R(x) = x^{n+1-\alpha}\,\frac{\Gamma(n+1)\Gamma(1-\alpha)}{\Gamma(n+2-\alpha)}.
\]

Now go back to the original expression:
\[
\frac{1}{\Gamma(1-\alpha)}\frac{d}{dx}\int_{0}^x \frac{t^n}{(x-t)^{\alpha}} \, dt
= \frac{1}{\Gamma(1-\alpha)}\frac{d}{dx} R(x).
\]

We compute:
\[
\frac{d}{dx} R(x)
= \frac{\Gamma(n+1)\Gamma(1-\alpha)}{\Gamma(n+2-\alpha)} (n+1-\alpha)x^{n-\alpha}.
\]

Multiplying by \(\frac{1}{\Gamma(1-\alpha)}\) gives
\[
\frac{1}{\Gamma(1-\alpha)}\frac{d}{dx} R(x)
= \frac{\Gamma(n+1)}{\Gamma(n+2-\alpha)} (n+1-\alpha)x^{n-\alpha}.
\]

Use the Gamma function identity
\[
\Gamma(n+2-\alpha) = (n+1-\alpha)\,\Gamma(n+1-\alpha)
\]
to simplify:
\[
\frac{(n+1-\alpha)}{\Gamma(n+2-\alpha)} = \frac{1}{\Gamma(n+1-\alpha)}.
\]

Therefore
\[
\frac{1}{\Gamma(1-\alpha)}\frac{d}{dx}\int_{0}^x \frac{t^n}{(x-t)^{\alpha}} \, dt
= \frac{\Gamma(n+1)}{\Gamma(n+1-\alpha)} x^{n-\alpha}.
\]
\hfill \end{proof}

\begin{lem}\label{lemappOGamma}
 For all $\alpha \in (0,1), n \in \mathbb{N},$ we have that:
 \begin{enumerate}
     \item[(i)] $$ \frac{\Gamma(n+1)}{\Gamma(n+1-\alpha)} \sim n^{\alpha},\quad n\to \infty,$$
     \item[(ii)] $$ \frac{\Gamma(n+1)}{\Gamma(n+1+\alpha)} \sim n^{-\alpha},\quad n\to \infty.$$
 \end{enumerate}
\end{lem}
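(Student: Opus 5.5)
We prove the two asymptotics $\Gamma(n+1)/\Gamma(n+1\mp\alpha)\sim n^{\pm\alpha}$ via Wendel's inequality, which gives two-sided bounds for ratios of Gamma functions. Statement (ii) follows from (i) by a shift in the argument.

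\textbf{Step 1: Wendel's inequality.} For $x>0$ and $s\in(0,1)$ we show
\[
\Bigl(\frac{x}{x+s}\Bigr)^{1-s}\le \frac{\Gamma(x+s)}{x^{s}\,\Gamma(x)}\le 1 .
\]
The upper bound is Hölder's inequality with exponents $1/s$ and $1/(1-s)$ applied to
\[
\Gamma(x+s)=\int_0^\infty t^{x+s-1}e^{-t}\,dt=\int_0^\infty \bigl(t^{x}e^{-t}\bigr)^{s}\bigl(t^{x-1}e^{-t}\bigr)^{1-s}\,dt .
\]
This yields $\Gamma(x+s)\le \Gamma(x+1)^s\Gamma(x)^{1-s}=x^s\Gamma(x)$.

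For the lower bound, apply the same inequality with $x$ replaced by $x+s$ and $s$ replaced by $1-s$. This gives $\Gamma(x+1)\le (x+s)^{1-s}\Gamma(x+s)$. Using $\Gamma(x+1)=x\Gamma(x)$ and rearranging produces the lower bound.

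\textbf{Step 2: proof of (ii).} Take $x=n+1$ and $s=\alpha$. Since $x/(x+\alpha)\to1$, Step 1 gives $\Gamma(n+1+\alpha)/\Gamma(n+1)\sim (n+1)^\alpha$. Because $(n+1)^\alpha\sim n^\alpha$, we get
\[
\frac{\Gamma(n+1)}{\Gamma(n+1+\alpha)}\sim n^{-\alpha}.
\]

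\textbf{Step 3: proof of (i).} Take $x=n+1-\alpha>0$ and $s=\alpha$. Step 1 gives
\[
\frac{\Gamma(n+1)}{\Gamma(n+1-\alpha)}\sim (n+1-\alpha)^{\alpha}\sim n^{\alpha}.
\]

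The only real obstacle is establishing Wendel's inequality, and that is handled by the log-convexity (Hölder) argument above. If the paper treats $\mathbb{N}$ as containing $0$, the statement is purely asymptotic, so $n=0$ plays no role.

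Finally, the way Lemma~\ref{lemappOGamma} is used (convergence of the series via Lemma~\ref{lemseriesconv}) actually needs uniform two-sided comparability, not just asymptotic equivalence. Step 1 supplies this: there are constants $0<c\le C$ such that
\[
c\,n^{\pm\alpha}\le \frac{\Gamma(n+1)}{\Gamma(n+1\mp\alpha)}\le C\,n^{\pm\alpha}\qquad\text{for all } n\ge1 .
\]
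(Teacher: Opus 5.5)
Your proof is correct, but it takes a different route from the paper's. The paper applies Stirling's formula $\Gamma(x+1)\sim\sqrt{2\pi x}\,(x/e)^x$ at the real points $x=n$ and $x=n\mp\alpha$, and then evaluates the limits $\bigl(n/(n\mp\alpha)\bigr)^n\to e^{\pm\alpha}$ and $\sqrt{n/(n\mp\alpha)}\to 1$. You instead derive Wendel's inequality $\bigl(\tfrac{x}{x+s}\bigr)^{1-s}\le \tfrac{\Gamma(x+s)}{x^{s}\Gamma(x)}\le 1$ from H\"older's inequality (log-convexity of $\Gamma$) together with $\Gamma(x+1)=x\Gamma(x)$. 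You then read off both asymptotics by taking $s=\alpha$ and $x=n+1$, respectively $x=n+1-\alpha$.

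Your argument is more elementary, since it avoids Stirling's formula for non-integer arguments altogether. It also yields explicit two-sided bounds valid for every $n$, for instance $(n+1-\alpha)^{\alpha}\bigl(\tfrac{n+1-\alpha}{n+1}\bigr)^{1-\alpha}\le \tfrac{\Gamma(n+1)}{\Gamma(n+1-\alpha)}\le (n+1-\alpha)^{\alpha}$. The paper's computation gives only the limit statement.

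One small correction to your closing remark: Lemma~\ref{lemseriesconv} needs nothing beyond asymptotic equivalence. Convergence of a series with nonnegative terms depends only on its tail. Moreover, for positive sequences, a ratio tending to a positive constant already implies uniform two-sided comparability for all $n\ge 1$. Your explicit uniform bounds are therefore convenient but not required.
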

\begin{proof}
\noindent Fix $\alpha \in (0,1)$. We use Stirling's asymptotic formula
\[
\Gamma(x+1) \sim \sqrt{2\pi x}\,\left(\frac{x}{e}\right)^x, \qquad x\to\infty,
\]
\noindent which means
\[
\lim_{x\to\infty}
\frac{\Gamma(x+1)}{\sqrt{2\pi x}\,(x/e)^x} = 1.
\]


(i): Apply Stirling's formula to $x = n$ and $x = n-\alpha$:
\[
\Gamma(n+1) \sim \sqrt{2\pi n}\,\Big(\frac{n}{e}\Big)^n,
\qquad
\Gamma(n+1-\alpha) \sim \sqrt{2\pi (n-\alpha)}\,\Big(\frac{n-\alpha}{e}\Big)^{n-\alpha}.
\]

\noindent Hence
\begin{align*}
\frac{\Gamma(n+1)}{\Gamma(n+1-\alpha)}
&\sim
\frac{\sqrt{2\pi n}\,\big(\frac{n}{e}\big)^n}{\sqrt{2\pi (n-\alpha)}\,\big(\frac{n-\alpha}{e}\big)^{n-\alpha}}\\[0.5em]
&=
\sqrt{\frac{n}{n-\alpha}}\,
\frac{n^n e^{-n}}{(n-\alpha)^{n-\alpha} e^{-(n-\alpha)}}\\[0.5em]
&=
\sqrt{\frac{n}{n-\alpha}}\,
\frac{n^n}{(n-\alpha)^{n-\alpha}}\,e^{-\alpha}.
\end{align*}

\noindent Rewrite
\[
\frac{n^n}{(n-\alpha)^{n-\alpha}}
=
\frac{n^n}{(n-\alpha)^n}\,(n-\alpha)^\alpha
=
\left(\frac{n}{n-\alpha}\right)^n (n-\alpha)^\alpha.
\]

\noindent Thus
\[
\frac{\Gamma(n+1)}{\Gamma(n+1-\alpha)}
\sim
\sqrt{\frac{n}{n-\alpha}}\,
\left(\frac{n}{n-\alpha}\right)^n e^{-\alpha}\,(n-\alpha)^\alpha.
\]

\noindent Note that
\[
\left(\frac{n}{n-\alpha}\right)^n
=
\left(\frac{1}{1-\alpha/n}\right)^n \longrightarrow e^{\alpha}
\quad\text{as } n\to\infty,
\]
\noindent and
\[
\sqrt{\frac{n}{n-\alpha}} \longrightarrow 1, \text{ as well as}
\quad (n-\alpha)^\alpha \sim n^\alpha.
\]

\noindent Combining these limits yields,
\[
\frac{\Gamma(n+1)}{\Gamma(n+1-\alpha)}
\sim
\sqrt{\frac{n}{n-\alpha}}\,
\left(\frac{n}{n-\alpha}\right)^n e^{-\alpha}\,(n-\alpha)^\alpha
\sim
n^\alpha.
\]

\noindent Consequently,
\[
\frac{\Gamma(n+1)}{\Gamma(n+1-\alpha)} \sim n^\alpha.
\]

\medskip

(ii): Next we apply Stirling's formula to $x = n$ and $x = n+\alpha$:
\[
\Gamma(n+1) \sim \sqrt{2\pi n}\,\Big(\frac{n}{e}\Big)^n,
\qquad
\Gamma(n+1+\alpha) \sim \sqrt{2\pi (n+\alpha)}\,\Big(\frac{n+\alpha}{e}\Big)^{n+\alpha}.
\]

\noindent Then
\begin{align*}
\frac{\Gamma(n+1)}{\Gamma(n+1+\alpha)}
&\sim
\frac{\sqrt{2\pi n}\,\big(\frac{n}{e}\big)^n}{\sqrt{2\pi (n+\alpha)}\,\big(\frac{n+\alpha}{e}\big)^{n+\alpha}}\\[0.5em]
&=
\sqrt{\frac{n}{n+\alpha}}\,
\frac{n^n e^{-n}}{(n+\alpha)^{n+\alpha} e^{-(n+\alpha)}}\\[0.5em]
&=
\sqrt{\frac{n}{n+\alpha}}\,
\frac{n^n}{(n+\alpha)^{n+\alpha}}\,e^{\alpha}.
\end{align*}

\noindent Rewrite
\[
\frac{n^n}{(n+\alpha)^{n+\alpha}}
=
\frac{n^n}{(n+\alpha)^n}\,(n+\alpha)^{-\alpha}
=
\left(\frac{n}{n+\alpha}\right)^n (n+\alpha)^{-\alpha}.
\]

\noindent Therefore
\[
\frac{\Gamma(n+1)}{\Gamma(n+1+\alpha)}
\sim
\sqrt{\frac{n}{n+\alpha}}\,
\left(\frac{n}{n+\alpha}\right)^n e^{\alpha}\,(n+\alpha)^{-\alpha}.
\]

\noindent Now
\[
\left(\frac{n}{n+\alpha}\right)^n
=
\left(\frac{1}{1+\alpha/n}\right)^n \longrightarrow e^{-\alpha}
\quad\text{as } n\to\infty,
\]
\noindent and
\[
\sqrt{\frac{n}{n+\alpha}} \longrightarrow 1, 
\text{as well as} \quad (n+\alpha)^{-\alpha} \sim n^{-\alpha}.
\]

\noindent So
\[
\frac{\Gamma(n+1)}{\Gamma(n+1+\alpha)}
\sim
\sqrt{\frac{n}{n+\alpha}}\,
\left(\frac{n}{n+\alpha}\right)^n e^{\alpha}\,(n+\alpha)^{-\alpha}
\sim
n^{-\alpha}.
\]

\noindent Hence summarizing
\[
\frac{\Gamma(n+1)}{\Gamma(n+1+\alpha)} \sim n^{-\alpha}.
\]

 \end{proof}

\begin{lem}\label{lemseriesconv}  
    Let $(a_n)_n$ be a sequence of non-negative and $(g_n)_n$, $(f_n)_n$ be sequences of positive real numbers, fulfilling $\lim\limits_{n\to \infty }\frac{f_n}{g_n}=C>0$. Then 
    $$\sum_{n=0}^{\infty} g_n \cdot a_n < \infty \Leftrightarrow \sum_{n=0}^{\infty} f_n \cdot a_n <\infty.$$
\end{lem}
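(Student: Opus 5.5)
The plan is a standard limit comparison argument. From the hypothesis $\lim_{n\to\infty} f_n/g_n = C>0$ we extract an index $n_0\in\mathbb N$ beyond which the ratio is pinned between two positive constants. Choosing $\varepsilon = C/2$ gives
\[
\tfrac{C}{2}\, g_n \le f_n \le \tfrac{3C}{2}\, g_n \qquad \text{for all } n\ge n_0 .
\]
Since $g_n>0$, multiplying by $g_n$ preserves the inequalities. Since $a_n\ge 0$, multiplying further by $a_n$ gives
\[
\tfrac{C}{2}\, g_n a_n \le f_n a_n \le \tfrac{3C}{2}\, g_n a_n, \qquad n\ge n_0 .
\]

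Next, split each series into the finite head $\sum_{n<n_0}$ and the tail $\sum_{n\ge n_0}$. All terms are non-negative, so each series either converges or diverges to $+\infty$, and the head is a finite real number in both cases. Convergence of each series is therefore equivalent to convergence of its tail.

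For the direction ``$\Rightarrow$'', suppose $\sum g_n a_n<\infty$. The upper bound $f_n a_n \le \tfrac{3C}{2}\, g_n a_n$ and the comparison test for non-negative series give $\sum_{n\ge n_0} f_n a_n<\infty$, hence $\sum_n f_n a_n<\infty$. For ``$\Leftarrow$'', suppose $\sum f_n a_n<\infty$. The lower bound rewritten as $g_n a_n \le \tfrac{2}{C} f_n a_n$ yields convergence of $\sum g_n a_n$ in the same way.

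There is no real obstacle. The only points needing care are that $C$ is strictly positive (so the factor $2/C$ is finite) and that $a_n\ge 0$ (so the comparison test applies and multiplying by $a_n$ keeps the inequalities). In the paper's applications, Lemma \ref{lemappOGamma} supplies the ratio condition with $C=1$, taking $f_n=\Gamma(2n+1)/\Gamma(2n+1\mp\alpha)$ and $g_n=(2n)^{\pm\alpha}$ or $n^{\pm\alpha}$, and $a_n = n!\,|F^{(n)}|^2$.
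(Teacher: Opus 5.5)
Your proof is correct and is the same limit-comparison argument the paper uses (pin $f_n/g_n$ between two positive constants on a tail, multiply by $a_n\ge 0$, compare the tails). Your explicit choice $\varepsilon=C/2$ is in fact a bit cleaner than the paper's write-up, which inverts the ratio to $g_n/f_n$ and never requires $\varepsilon<C$; one harmless slip in your closing aside is that for $g_n=n^{\pm\alpha}$ the limit is $C=2^{\pm\alpha}$, not $1$, which does not matter since the lemma only needs $C>0$.
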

\begin{proof}Since $\lim_{n\to \infty }\frac{f_n}{g_n}=C>0$ we have that for all $\varepsilon>0$ we find an $N \in \mathbb{N}$ such that for all $n >N$ we have that $C-\varepsilon < \frac{g_n}{f_n} < C+\varepsilon$. This implies directly that
$$
(C-\varepsilon)f_n < g_n < (C+\varepsilon)f_n,
$$
since the $f_n$ are positive. Because $a_n \ge 0$, $n \in \mathbb{N}$, we obtain

$$
(C-\varepsilon)\sum_{n=N}^{\infty} f_n \cdot a_n \le \sum_{n=N}^{\infty} g_n \cdot a_n \le (C+\varepsilon)\sum_{n=N}^{\infty} f_n \cdot a_n,
$$
which implies the assertion via the mayorant convergence criterion. \hfill \end{proof}
\begin{lem}\label{lem_multivar}
Let $0 \neq f, g \in L^2(\mathbb{R},dx)$, $P \in \mathbb{P}$ and $0 < \lambda < 1$, then 
\begin{multline*}
 \int_{\mathcal{S}^{\prime}_{\mathbb{C}}} S(\delta(\langle f, \cdot\rangle))(\lambda Pu) \overline{S(\delta(\langle g, \cdot\rangle))(\lambda Pu)} d\nu(u) \\
 = \frac{1}{2\pi \|f\| \|g\|} \frac{ 1}{\sqrt{1-\sigma_P^2\lambda^4} }, \quad \sigma_P := \left\langle \frac{Pf}{\|f\|}, \frac{Pg}{\|g\|} \right\rangle.
\end{multline*}
\end{lem}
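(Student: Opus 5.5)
The plan is to reduce the infinite-dimensional integral to a two-dimensional complex Gaussian integral and then evaluate that integral explicitly. Recall the one-dimensional formula \eqref{STDD}:
\[
S(\delta(\langle f,\cdot\rangle))(\xi)=\frac{1}{\sqrt{2\pi}\,\|f\|}\exp\Bigl(-\frac{\langle \xi,f\rangle^2}{2\|f\|^2}\Bigr),
\]
extended holomorphically to $\xi=\lambda Pu$. Since $P$ is self-adjoint with respect to $(\cdot,\cdot)$, we have $\langle Pu,f\rangle=\langle u,Pf\rangle$. Put $a:=Pf/\|f\|$ and $b:=Pg/\|g\|$; both lie in $\mathcal S$ and are real. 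The integrand then becomes
\[
\frac{1}{2\pi\|f\|\|g\|}\exp\Bigl(-\tfrac{\lambda^2}{2}\langle u,a\rangle^2-\tfrac{\lambda^2}{2}\overline{\langle u,b\rangle}^{\,2}\Bigr).
\]
It therefore depends on $u$ only through the two complex random variables $Z_1=\langle u,a\rangle$ and $Z_2=\langle u,b\rangle$.

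Next I identify the joint law of $(Z_1,Z_2)$ under $\nu$. Write $u=v+iw$, with $v,w$ independent and each Gaussian with covariance $(\cdot,\cdot)/2$. Then $Z_1,Z_2$ form a centered complex Gaussian vector with
\[
\mathbb E[Z_j^2]=0,\qquad \mathbb E|Z_1|^2=\|a\|^2,\qquad \mathbb E|Z_2|^2=\|b\|^2,\qquad \mathbb E[Z_1\overline{Z_2}]=\langle a,b\rangle=\sigma_P .
\]
The first identity holds because the covariances of real and imaginary parts cancel. The key simplification is to expand
\[
\exp\bigl(-\tfrac{\lambda^2}{2}Z_1^2\bigr)\exp\bigl(-\tfrac{\lambda^2}{2}\overline{Z_2}^{\,2}\bigr)
\]
as a double power series in $Z_1^2$ and $\overline{Z_2}^{\,2}$. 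Orthogonality of monomials, in the form \eqref{eqortho} applied to $F^{(2k)}=a^{\otimes 2k}$ and $G^{(2l)}=b^{\otimes 2l}$, gives
\[
\int Z_1^{2k}\,\overline{Z_2}^{\,2l}\,d\nu=\delta_{kl}\,(2k)!\,\sigma_P^{2k}.
\]
Summing the resulting series yields
\[
\sum_{k\ge0}\frac{(2k)!}{(k!)^2}\Bigl(\frac{\lambda^2\sigma_P}{2}\Bigr)^{2k}=\sum_{k\ge0}\binom{2k}{k}\frac{(\lambda^4\sigma_P^2)^k}{4^k}=\frac{1}{\sqrt{1-\lambda^4\sigma_P^2}},
\]
by the generating function $\sum_k\binom{2k}{k}x^k=(1-4x)^{-1/2}$. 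This is exactly the claimed formula.

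The main obstacle is justifying the exchange of summation and integration, since the exponentials are not bounded. I would use Fubini with absolute values. The integral $\int\exp(\tfrac{\lambda^2}{2}|Z_1|^2+\tfrac{\lambda^2}{2}|Z_2|^2)\,d\nu$ is finite, because by Cauchy--Schwarz it reduces to $\int \exp(\lambda^2|Z_1|^2)\,d\nu$, and $|Z_1|^2$ is exponentially distributed with mean $\|a\|^2\le 1$, so this integral converges for $\lambda<1$. This bound dominates the absolute double series and allows termwise integration. The bound $|\sigma_P|\le 1$, which ensures $\lambda^4\sigma_P^2<1$, follows from Cauchy--Schwarz together with $\|Pf\|\le\|f\|$.

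As a cross-check of the constants, setting $f=g$ with $P$ close to the identity recovers \eqref{eq43}.
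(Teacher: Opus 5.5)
Your proof is correct, and it takes a different route from the paper's. Both arguments start by moving $P$ onto $f$ and $g$.

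The paper then passes to the four real coordinates $(\mathrm{Re}\,Z_1,\mathrm{Im}\,Z_1,\mathrm{Re}\,Z_2,\mathrm{Im}\,Z_2)$. It writes down and inverts their covariance matrix $\Sigma$, encodes the exponent $-\tfrac{\lambda^2}{2}(Z_1^2+\overline{Z_2}^{\,2})$ as a complex symmetric $4\times 4$ matrix, and evaluates the resulting Gaussian integral through a $4\times 4$ determinant. The factor $\sqrt{\det\Sigma}=1-\sigma_P^2$ cancels at the end.

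You instead expand both exponentials and use the Bargmann--Segal orthogonality \eqref{eqortho} to get $\int Z_1^{2k}\overline{Z_2}^{\,2l}\,d\nu=\delta_{kl}(2k)!\,\sigma_P^{2k}$. You then sum with $\sum_k\binom{2k}{k}x^k=(1-4x)^{-1/2}$, and you justify the interchange by Tonelli, Cauchy--Schwarz and the exponential law of $|Z_j|^2$.

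Your route makes it transparent that only the cross-covariance $\sigma_P$ enters. The diagonal variances $\|Pf\|^2/\|f\|^2$ and $\|Pg\|^2/\|g\|^2$ drop out by circular symmetry, so the argument applies verbatim to every $P\in\mathbb P$, including the degenerate cases $|\sigma_P|=1$ or $Pf=0$.

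The paper's matrix computation, by contrast, puts $1$ on the diagonal of $\Sigma$. That is the actual covariance only when $\|Pf\|=\|f\|$ and $\|Pg\|=\|g\|$. It also divides by $\sqrt{\det\Sigma}$, which needs $|\sigma_P|<1$. The final formula is nevertheless right, precisely because, as your argument shows, it does not depend on those diagonal entries.

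What the paper's approach offers in return is a single closed-form Gaussian evaluation with no series manipulation or interchange of limits, at the cost of the determinant algebra.
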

\begin{proof}
We compute
\begin{eqnarray*}
&&\int_{\mathcal{S}^{\prime}_{\mathbb{C}}} S(\delta(\langle f, \cdot\rangle))(\lambda Pu) \overline{S(\delta(\langle g, \cdot\rangle))(\lambda Pu)} d\nu(u)\\
&=& \frac{1}{2\pi \|f\| \|g\|} \int_{\mathcal{S}^{\prime}_{\mathbb{C}}} \exp\left(-\frac{\lambda^2}{2} \left(\left\langle Pu, \frac{f}{\|f\|} \right\rangle^2 + \left\langle \overline{Pu} , \frac{g}{\|g\|} \right\rangle^2 \right) \right)\, d\nu(u)\\
&=&\frac{1}{2\pi \|f\| \|g\|} \int_{\mathcal{S}^{\prime}_{\mathbb{C}}} \exp\left(-\frac{\lambda^2}{2} \left(\left\langle u, \frac{Pf}{\|f\|} \right\rangle^2 + \left\langle \overline{u} , \frac{Pg}{\|g\|} \right\rangle^2 \right) \right)\, d\nu(u)\\
&=&\frac{1}{2\pi^3 \|f\| \|g\| \sqrt{\mathrm{det(\Sigma)}}}  \int_{\mathbb{R}^4} \exp\bigg(-\frac{\lambda^2}{2} \big((x+iy)^2 + (v-iw)^2 \big) \bigg) \\
& & \cdot \exp\left(-\left\langle \begin{pmatrix} x\\y\\v\\w \end{pmatrix} \Sigma^{-1} \begin{pmatrix} x\\y\\v\\w \end{pmatrix} \right\rangle \right) \,dx dy dv dw, 
\end{eqnarray*}
where $\Sigma$ is given by
$$
\Sigma= \begin{pmatrix}
    1& 0 & \sigma_P & 0 \\
    0& 1 & 0& \sigma_P \\
     \sigma_P & 0 & 1 & 0 \\
   0 & \sigma_P & 0 & 1 \\
\end{pmatrix},  
$$
with $\sigma_P= \left\langle \frac{Pf}{\|f\|}, \frac{Pg}{\|g\|} \right\rangle $. Hence, we have a fully commutative block matrix and
\[
\det(\Sigma)=(1-\sigma_P^2)^2.
\]
We then find for the inverse: 

$$
\begin{pmatrix}
    I&A\\
    A&I
\end{pmatrix}^{-1} = (I-A^2)^{-1}\times\begin{pmatrix}
    I&-A\\
    -A&I
\end{pmatrix},
$$
where $\times$ here denotes block-wise matrix multiplication.
Note that $$(I-A^2)^{-1} = \begin{pmatrix}
    \frac{1}{1-\sigma_P^2}&0\\
    0& \frac{1}{1-\sigma_P^2}
\end{pmatrix},$$
hence,
$$
\Sigma^{-1}= \frac{1}{1-\sigma_P^2} \begin{pmatrix}
    1& 0 & -\sigma_P & 0 \\
    0& 1 & 0& -\sigma_P \\
     -\sigma_P & 0 & 1 & 0 \\
   0 & -\sigma_P & 0 & 1 \\
\end{pmatrix}.
$$

In addition, we can rewrite
$$
\big((x+iy)^2 + (v-iw)^2 \big) = \big(x^2 +i2xy -y^2 + v^2-2ivw -w^2 \big)= \left\langle \begin{pmatrix} x\\y\\v\\w \end{pmatrix}, A \begin{pmatrix} x\\y\\v\\w \end{pmatrix}\right\rangle,
$$
where 
$$
A= \begin{pmatrix}
    1 & i & 0 & 0 \\
    i & -1 & 0 & 0\\
    0 & 0 & 1& -i\\
    0 & 0 & -i& -1\\
\end{pmatrix}.
$$
Altogether we have: 
\begin{multline*}
 \int_{\mathbb{R}^4} \exp\left(-\frac{\lambda^2}{2} \big((x+iy)^2 + (v-iw)^2 \big) \right) \exp\left(-\left\langle \begin{pmatrix} x\\y\\v\\w \end{pmatrix} \Sigma^{-1} \begin{pmatrix} x\\y\\v\\w \end{pmatrix} \right\rangle \right) \,dxdydvdw \\
= \int_{\mathbb{R}^4}
\exp\left(-\left\langle \begin{pmatrix} x\\y\\v\\w \end{pmatrix}, \left(\tfrac{\lambda^2}{2}\begin{pmatrix}
    1 & i & 0 & 0 \\
    i & -1 & 0 & 0\\
    0 & 0 & 1& -i\\
    0 & 0 & -i& -1\\
\end{pmatrix} \right.\right.\right. + 
\\
\left.\left.\left.\tfrac{1}{1-\sigma_P^2} \begin{pmatrix}
    1& 0 & -\sigma_P & 0 \\
    0& 1 & 0& -\sigma_P \\
     -\sigma_P & 0 & 1 & 0 \\
   0 & -\sigma_P & 0 & 1 \\
\end{pmatrix}\right) \begin{pmatrix} x\\y\\v\\w \end{pmatrix} \right\rangle \right) \,dxdydvdw\\
=(2\pi)^2\sqrt{\mathrm{det}\begin{pmatrix}
    \lambda^2 + \frac{2}{1-\sigma_P^2}& i\lambda^2 & -\frac{2\sigma_P}{1-\sigma_P^2} & 0 \\
    i\lambda^2 & -\lambda^2+\frac{2}{1-\sigma_P^2} & 0 & -\frac{2\sigma_P}{1-\sigma_P^2}\\
    -\frac{2\sigma_P}{1-\sigma_P^2} & 0 & \lambda^2+\frac{2}{1-\sigma_P^2}& -i\lambda^2\\
    0 & -\frac{2\sigma_P}{1-\sigma_P^2} & -i\lambda^2& -\lambda^2+\frac{2}{1-\sigma_P^2}\\
\end{pmatrix}^{-1}}=\pi^2 \frac{ 1-\sigma_P^2}{\sqrt{1-\sigma^2\lambda^4} }
\end{multline*}
Hence,
\begin{eqnarray*}
\int_{\mathcal{S}^{\prime}_{\mathbb{C}}} S(\delta(\langle f, \cdot\rangle))(\lambda u) \overline{S(\delta(\langle g, \cdot\rangle))(\lambda u)} d\nu(u)
&=& \frac{1}{2\pi \|f\| \|g\|(1-\sigma_P^2)} \frac{ 1-\sigma_P^2}{\sqrt{1-\sigma_P^2\lambda^4} }\\
&=&\frac{1}{2\pi \|f\| \|g\|} \frac{ 1}{\sqrt{1-\sigma_P^2\lambda^4} }.
\end{eqnarray*}
\end{proof}

\begin{lem}\label{derivative_silt}
We have: 
$$
\frac{d}{dx}\left( \frac{1}{\sqrt{a - b x^{4}}} \right) = \frac{2 b x^{3}}{\left( a - b x^{4} \right)^{3/2}},
$$

$$
\frac{d^{2}}{dx^{2}}\left( \frac{1}{\sqrt{a - b x^{4}}} \right)
= \frac{6 b x^{2}\,\bigl(a + b x^{4}\bigr)}{(a - b x^{4})^{5/2}},
$$

$$
\frac{d^{3}}{dx^{3}}\left( \frac{1}{\sqrt{a - b x^{4}}} \right)
= 
\frac{12a^{2}b\,x \;+\; 84ab^{2}x^{5} \;+\; 24 b^{3} x^{9}}
{(a - b x^{4})^{7/2}}.
$$
\end{lem}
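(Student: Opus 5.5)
This is pure calculus: treat $a,b$ as constants with $a-bx^4>0$, write $g(x)=(a-bx^4)^{-1/2}$, and differentiate three times with the chain and quotient rules.

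\emph{First derivative.} By the chain rule,
\[
g'(x)=-\tfrac12(a-bx^4)^{-3/2}\cdot(-4bx^3)=2bx^3(a-bx^4)^{-3/2}.
\]

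\emph{Second derivative.} Apply the product rule to $2bx^3(a-bx^4)^{-3/2}$:
\[
g''(x)=6bx^2(a-bx^4)^{-3/2}+2bx^3\cdot\tfrac32\cdot 4bx^3(a-bx^4)^{-5/2}.
\]
Factor out $6bx^2(a-bx^4)^{-5/2}$. The bracket that remains is $(a-bx^4)+2bx^4=a+bx^4$, which gives the claimed
\[
g''(x)=\frac{6bx^2(a+bx^4)}{(a-bx^4)^{5/2}}.
\]

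\emph{Third derivative.} Differentiate $6b\,(ax^2+bx^6)(a-bx^4)^{-5/2}$ with the product rule. The derivative of the polynomial factor is $2ax+6bx^5$. The derivative of the power factor is $\tfrac52\cdot 4bx^3(a-bx^4)^{-7/2}=10bx^3(a-bx^4)^{-7/2}$. Bringing both terms over the common denominator $(a-bx^4)^{7/2}$, the numerator is
\[
6b\bigl[(2ax+6bx^5)(a-bx^4)+10bx^3(ax^2+bx^6)\bigr].
\]
Expanding inside the bracket:
\begin{itemize}
\item $(2ax+6bx^5)(a-bx^4)=2a^2x+4abx^5-6b^2x^9$;
\item $10bx^3(ax^2+bx^6)=10abx^5+10b^2x^9$.
\end{itemize}
Their sum is $2a^2x+14abx^5+4b^2x^9$. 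Multiplying by $6b$ gives the numerator
\[
12a^2bx+84ab^2x^5+24b^3x^9,
\]
which matches the stated third derivative.

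No step is a real obstacle. The only point needing care is the bookkeeping in the third derivative: the $x^9$ coefficient comes from $-6b^2+10b^2=4b^2$, and the $x^5$ coefficient from $4ab+10ab=14ab$.
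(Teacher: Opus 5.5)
Your proposal is correct: all three derivatives follow by direct chain/product-rule differentiation, and your expansion for the third derivative (numerator $6b\,(2a^2x+14abx^5+4b^2x^9)$) checks out. The paper states this lemma without proof, and the routine computation you give, under the standing assumption $a-bx^4>0$, is exactly what it implicitly relies on.
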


\end{document}